\theoremstyle{plain}
\newtheorem{theorem}{Theorem}
\newtheorem{lemma}[theorem]{Lemma}
\newtheorem{proposition}[theorem]{Proposition}
\newtheorem{corollary}[theorem]{Corollary}
\numberwithin{theorem}{section}
\numberwithin{equation}{section}
\newcommand{\C}{\mathbb{C}}
\newcommand{\N}{\mathbb{N}}
\newcommand{\rN}{\overrightarrow{\mathbb{N}}}
\newcommand{\R}{\mathbb{R}}
\newcommand{\T}{\mathbb{T}}
\newcommand{\Z}{\mathbb{Z}}
\newcommand{\GL}{\mathrm{GL}}
\newcommand{\SU}{\mathrm{SU}}
\newcommand{\U}{\mathrm{U}}
\newcommand{\Spe}{\mathrm{S}}
\newcommand{\fu}{\mathfrak{u}}
\newcommand{\fsl}{\mathfrak{sl}}
\newcommand{\cH}{\mathcal{H}}
\newcommand{\cT}{\mathcal{T}}
\newcommand{\cA}{\mathcal{A}}
\newcommand{\cP}{\mathcal{P}}
\DeclareMathOperator{\diag}{diag}
\begin{document}

\title[Toeplitz operators with $\U(2)\times\T^2$-invariant symbols]{Toeplitz operators on the domain $\{Z\in M_{2\times2}(\C) \mid Z Z^* < I\}$ with $\U(2)\times\T^2$-invariant symbols}

\author{Matthew Dawson}
\address{CONACYT--CIMAT Unidad M\'erida, Parque Cient\'ifico y Tecnol\'ogico de Yucat\'an, Km 5.5 Carretera Sierra Papacal -- Chuburn\'a Puerto, Sierra Papacal, Mérida, Yucat\'an 97302, M\'exico}
\email{matthew.dawson@cimat.mx}

\author{Gestur \'{O}lafsson}
\address{Department of Mathematics, Louisiana State University, Baton Rouge, LA 70803, U.S.A.}
\email{olafsson@math.lsu.edu}

\author{Raul Quiroga-Barranco}
\address{Centro de Investigaci\'{o}n en Matem\'{a}ticas,  Jalisco s/n, Col. Valenciana, Guanajuato, GTO 36240, M\'{e}xico}
\email{quiroga@cimat.mx}

\thanks{The research of G. \'Olafsson was partially supported by Simon grant 586106. The research of Raul Quiroga-Barranco was partially supported by a Conacyt grant.}

\begin{abstract}
Let $D$ be the irreducible bounded symmetric domain of $2\times2$ complex matrices that satisfy $ZZ^* < I_2$. The biholomorphism group of $D$ is realized by $\U(2,2)$ with isotropy at the origin given by $\U(2)\times\U(2)$. Denote by $\T^2$ the subgroup of diagonal matrices in $\U(2)$. We prove that the set of $\U(2)\times\T^2$-invariant essentially bounded symbols yield Toeplitz operators that generate commutative $C^*$-algebras on all weighted Bergman spaces over $D$.  Using tools from representation theory, we also provide an integral formula for the spectra of these Toeplitz operators.
\end{abstract}

\maketitle

\textit{Dedicated to Nikolai Vasilevski on the occasion of his 70th birthday}

\section{Introduction}
\noindent
In this work we consider the problem of the existence of commutative $C^*$-algebras that are generated by families of Toeplitz operators on weighted Bergman spaces over irreducible bounded symmetric domains. More precisely, we are interested in the case where the Toeplitz operators are those given by symbols invariant by some closed subgroup of the group of biholomorphisms. This problem has turned out to be a quite interesting one thanks in part to the application of representation theory.

An important particular case is given when one considers the subgroup fixing some point in the domain, in other words, a maximal compact subgroup of the group of biholomorphisms. In \cite{DOQ}, we proved that for such maximal compact subgroups, the corresponding $C^*$-algebra is commutative. On the other hand, there is another interesting family of subgroups to consider: the maximal tori in the group of biholomorphisms. By the results from \cite{DOQ} it is straightforward to check that the $C^*$-algebra generated by the Toeplitz operators whose symbols are invariant under a fixed maximal torus is commutative if and only if the irreducible bounded symmetric domain is biholomorphically equivalent to some unit ball.

These results have inspired Nikolai Vasilevski to pose the following question. Let $D$ be an irreducible bounded symmetric domain that is not biholomorphically equivalent to a unit ball (that is, it is not of rank one), $K$ a maximal compact subgroup and $T$ a maximal torus in the group of biholomorphisms of $D$. Does there exist a closed subgroup $H$ such that $T \subsetneq H \subsetneq K$ for which the $C^*$-algebras (for all weights) generated by Toeplitz operators with $H$-invariant symbols are commutative? The goal of this work is to give a positive answer to this question for the classical Cartan domain of type $I$ of $2\times 2$ matrices. In the rest of this work we will denote simply by $D$ this domain.

The group of biholomorphisms of $D$ is realized by the Lie group $\U(2,2)$ acting by fractional linear transformations. A maximal compact subgroup is given by $\U(2)\times\U(2)$, which contains the maximal torus $\T^2\times\T^2$, where $\T^2$ denotes the group of $2\times2$ diagonal matrices with diagonal entries in $\T$. We prove that there are exactly two subgroups properly between $\U(2)\times\U(2)$ and $\T^2\times\T^2$, and these are $\U(2)\times\T^2$ and $\T^2\times\U(2)$ (see Proposition~\ref{prop:subgroupsT4U2U2}), for which it is also proved that the corresponding $C^*$-algebras generated by Toeplitz operators are unitarily equivalent (see Proposition~\ref{prop:U2T2vsT2U2}). In Section~\ref{sec:U(2)T2} we study the properties of $\U(2)\times\T^2$-invariant symbols. The main result here is Theorem~\ref{thm:Toeplitz-U2T2}, where we prove the commutativity of the $C^*$-algebras generated by Toeplitz operators whose symbols are $\U(2)\times\T^2$-invariant. As a first step to understand the structure of these $C^*$-algebras we provide in Section~\ref{sec:spectra} a computation of the spectra of the Toeplitz operators. The main result here is Theorem~\ref{thm:coefficients}.

We would like to use this opportunity to thank Nikolai Vasilevski, to whom this work is dedicated. Nikolai has been a very good friend and an excellent collaborator. He has provided us all with many ideas to work with.

\section{Preliminaries}\label{sec:preliminaries}
\noindent
Let us consider the classical Cartan domain given by
\[D  = \{ Z \in M_{2\times2}(\C) : Z Z^* < I_2 \},
\]
where $A<B$ means that $B-A $ is positive definite. This domain is sometimes denoted by either $D^I_{2,2}$ or $D_{2,2}$.

We consider the Lie groups
\[
    \U(2,2) = \{ M \in \GL(4,\C) : M^* I_{2,2} M = I_{2,2} \},\]
where
\[
    I_{2,2} =
    \begin{pmatrix}
      I_2 & 0 \\
      0 & -I_2
    \end{pmatrix}.
\]
and the Lie group
\[ \SU (2,2)  =\{M\in\U (2,2) : \det M =1\}.\]
Then $\SU (2,2)$, and hence also $\U(2,2)$, act transitively on $D$ by
\[
    \begin{pmatrix}
      A & B \\
      C & D
    \end{pmatrix}\cdot Z = (AZ+B)(CZ+D)^{-1},
\]
where we have a block decomposition by matrices with size $2\times2$. And $\SU (2,2)$ is, up to covering, the
group of biholomorphic isometries of $D$ and the action of $\SU (2,2)$ is locally faithful.
We observe that the action of $\U(2,2)$ on $D$ is not faithful. More precisely, the kernel of its action is the subgroup of matrices of the form $tI_4$, where $t \in \T$.

The maximal compact subgroup of $\U(2,2)$ that fixes the origin $0$ in $D$ is given by
\[
    \U(2)\times\U(2) = \left\{
        \begin{pmatrix}
          A & 0 \\
          0 & B
        \end{pmatrix} : A\in\U(2), B\in\U(2)
    \right\}.
\]
For simplicity, we write the elements of $\U(2)\times\U(2)$ as $(A,B)$ instead of using their
block diagonal representation. A maximal torus of $\U(2)\times\U(2)$ is given by
\[
    \T^4 = \{ (D_1, D_2) \in \U(2)\times\U(2) :
            D_1, D_2 \text{ diagonal} \}.
\]
The corresponding maximal compact subgroup and maximal torus in $\SU (2,2)$ are given by
\begin{align*}
   \Spe(\U(2)\times\U(2)) &= \{(A,B) \in \U(2)\times\U(2) :
        \det(A)\det(B) =1 \}, \\
    \T^3 &= \{(D_1,D_2) \in \T^4 : \det(D_1)\det(D_2) = 1\}.
\end{align*}

For every $\lambda > 3$ we will consider the weighted measure $v_\lambda$ on $D$ given by
\[
    \dif v_\lambda(Z) = c_\lambda \det(I_2 - Z Z^*)^{\lambda - 4}\dif Z
\]
where the constant $c_\lambda$ is chosen so that $v_\lambda$ is a probability measure. In particular, we have,
see \cite[Thm. 2.2.1]{Hua63}:
\[
    c_\lambda =
    \frac{ (\lambda -3)(\lambda-2)^2(\lambda-1)}{\pi^4}, \quad \lambda >3.\]

The Hilbert space inner product defined by $v_\lambda$ will be denoted by $\left<\cdot,\cdot\right>_\lambda$. We will from now on always assume that $\lambda >3$. The weighted Bergman space $\cH^2_\lambda(D)$ is the Hilbert space of holomorphic functions that belong to $L^2(D,v_\lambda)$. This is a reproducing kernel Hilbert space with Bergman kernel given by
\[
    k_\lambda(Z,W) = \det(I_2 - Z W^*)^{-\lambda},
\]
which yields the Bergman projection $B_\lambda : L^2(D,v_\lambda) \rightarrow \cH^2_\lambda(D)$ given by
\[B_\lambda f(Z)=\int_D f(W)k_\lambda (Z,W)dv_\lambda (W).\]

We recall that the space of holomorphic polynomials $\cP(M_{2\times2}(\C))$ is dense on every weighted Bergman space. Furthermore, it is well known that one has, for every $\lambda > 3$, the decomposition
\[
    \cH^2_\lambda(D) = \bigoplus_{d=0}^\infty \cP^d(M_{2\times2}(\C))
\]
into a direct sum of Hilbert spaces, where $\cP^d(M_{2\times2}(\C))$ denotes the subspace of homogeneous holomorphic polynomials of degree $d$.

For every essentially bounded symbol $\varphi \in L^\infty(D)$ and for every $\lambda > 3$ we define the corresponding Toeplitz operator by
\[
    T^{(\lambda)}_\varphi(f) = B_\lambda(\varphi f), \quad f\in\cH_\lambda^2(D) .\]

 In particular, these Toeplitz operators are given by the following expression
\[
    T^{(\lambda)}_\varphi(f)(Z) =
        c_\lambda\int_{D}
        \frac{\varphi(W)f(W)\det(I_2-WW^*)^{\lambda-4}}{\det(I_2-ZW^*)^\lambda}\, \dif W.
\]

On the other hand, for every $\lambda > 3$ there is an irreducible unitary representation of $\U (2,2)$ acting
on $\cH_\lambda^2(D)$ given by
\begin{align*}
  \pi_\lambda : \widetilde{\U}(2,2) \times \cH^2_\lambda(D) &\rightarrow
        \cH^2_\lambda(D) \\
  (\pi_\lambda(g)f)(Z) &= j(g^{-1},Z)^\frac{\lambda}{4} f(g^{-1}Z),
\end{align*}
where $j(g,Z)$ denotes the complex Jacobian of the transformation $g$ at the point $Z$.

We note that every $g \in \U(2)\times\U(2)$ defines a linear unitary transformation of $D$ that preserves all the measures $\dif v_\lambda$.

If $\lambda/4$ is not an integer, then $j(g,Z)^{\lambda/4}$ is not always well defined which makes it necessary to consider a covering of $\U(2,2)$. We therefore consider the universal covering group $\widetilde\U (2,2)$ of $\U (2,2)$ and its subgroup    $\R\times\SU(2)\times\R\times\SU(2)$, the universal covering group of $\U(2)\times\U(2)$. Here the covering map is given by
\[
    (x,A,y,B) \mapsto (e^{ix}A,e^{iy}B).
\]
Hence, the action of $\R\times\SU(2)\times\R\times\SU(2)$ on $D$ is given by the expression
\[
    (x,A,y,B)Z = e^{i(x-y)}AZB^{-1}.
\]
It follows that the restriction of $\pi_\lambda$ to the subgroup $\R\times\SU(2)\times\R\times\SU(2)$ is given by the expression
\[
    (\pi_\lambda(x,A,y,B)f)(Z) = e^{i\lambda(y-x)}f(e^{i(y-x)}A^{-1}ZB).
\]
It is well known that this restriction is multiplicity-free for every $\lambda>3$ (see \cite{DOQ} and \cite{K08}).

It is useful to consider as well the representation
\begin{align*}
  \pi_\lambda' : (U(2)\times\U(2)) \times \cH^2_\lambda(D) &\rightarrow
        \cH^2_\lambda(D) \\
  (\pi_\lambda'(g)f)(Z) &= f(g^{-1}Z),
\end{align*}
which is well-defined and unitary as a consequence of the previous remarks.
Note that the representations $\pi_\lambda$ and $\pi_\lambda'$ are defined on groups that differ by a covering, but they also differ by the factor $e^{i\lambda(y-x)}$. It follows that $\pi_\lambda'$ is multiplicity-free with the same isotypic decomposition as that of $\pi_\lambda$.

\section{Toeplitz operators invariant under subgroups of $\U(2)\times\U(2)$}
\noindent
For a closed subgroup $H \subset \U(2)\times\U(2)$ we will denote by $\cA^H$ the complex vector space of essentially bounded symbols $\varphi$ on $D$ that are $H$-invariant, i.e.~such that for every $h \in H$ we have
\[
    \varphi(hZ) = \varphi(Z)
\]
for almost every $Z \in D$. Denote by $\cT^{(\lambda)}(\cA^H)$ the $C^*$-algebra generated by Toeplitz operators with symbols in $\cA^H$ acting on the weighted Bergman space $\cH^2_\lambda(D)$. We have $\U (2)\times \U(2)= \T (\Spe(\U (2)\times \U(2)))$ and the center acts trivially on $D$. We also point to the special case that will be the main topic of this article.

Let us denote
\[
    \U(2) \times \T = \left\{
       (A, t) = \left(A,
        \begin{pmatrix}
          t & 0 \\
          0 & \overline{t}
        \end{pmatrix}
        \right) : A \in \U(2), t \in \T
        \right\}.
\]
We now prove that $\U(2)\times\T$-invariance is equivalent to $\U(2)\times\T^2$-invariance.

\begin{lemma}\label{lem:U2T-invariance}
    The groups $\U(2)\times\T^2$ and $\U(2)\times\T$ have the same orbits. In other words, for every $Z \in D$, we have
    \[
        (\U(2)\times\T) Z = (\U(2)\times\T^2) Z.
    \]
    In particular, an essentially bounded symbol $\varphi$ is $\U(2)\times\T^2$-invariant if and only if it is $\U(2)\times\T$-invariant.
\end{lemma}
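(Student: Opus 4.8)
The action of $\U(2)\times\U(2)$ on $D$ is given, for block-diagonal elements, by $(A,B)Z = AZB^{-1}$, so the second factor acts by right multiplication by $B^{-1}$. Since $\U(2)\times\T \subseteq \U(2)\times\T^2$, the inclusion $(\U(2)\times\T)Z \subseteq (\U(2)\times\T^2)Z$ is immediate, and the entire content of the lemma is the reverse inclusion. The plan is to show that every diagonal element of the right-hand factor can be rewritten, up to a central scalar, as an element of the smaller torus $\T$, and then to absorb that scalar into the $\U(2)$ factor on the left.

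Concretely, I would first observe that for $s,t\in\T$ one may choose a square root $c\in\T$ of $st$ and set $u = s/c$; a short computation then gives $\diag(s,t) = (cI_2)\diag(u,\overline u)$, where $|u|=1$ so that $\diag(u,\overline u)\in\T$ in the sense of the lemma. The only choice involved is the square root of $st$, which always exists in $\T$, so this factorization is available for every diagonal element of $\T^2$.

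Next I would exploit the key feature of the two-sided action: a central scalar $cI_2$ acts identically whether it occupies the left or the right factor. Indeed,
\[
    (A,\diag(s,t))Z = AZ\diag(s,t)^{-1} = AZ\diag(u,\overline u)^{-1}(cI_2)^{-1} = \overline c\,AZ\diag(u,\overline u)^{-1} = (\overline c A)Z\diag(u,\overline u)^{-1},
\]
where I used that $\overline c$ is a scalar and hence commutes with everything in sight. Since $\overline c A \in \U(2)$ (because $\U(2)$ contains the scalar torus $\T I_2$), this exhibits the point as $(\overline c A,\diag(u,\overline u))Z \in (\U(2)\times\T)Z$, establishing the reverse inclusion and thus the equality of orbits.

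I do not expect a serious obstacle here; the argument is essentially the remark that $\T^2$ is generated by its diagonal $\SU(2)$-torus $\T$ together with the center $\T I_2$ of $\U(2)$, and the center contributes nothing new to the orbits once the left $\U(2)$ factor is present. The only point requiring care is the scalar bookkeeping, namely tracking the inverses and confirming that the reabsorbed factor genuinely lands in $\U(2)$. Finally, the equivalence of invariance for symbols is immediate: an essentially bounded function is $H$-invariant precisely when it is almost everywhere constant on $H$-orbits, so two subgroups with identical orbits determine identical spaces of invariant symbols.
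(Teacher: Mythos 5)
Your proposal is correct and follows essentially the same route as the paper: both arguments reduce to the observation that $\T^2$ is generated by the special torus $\T$ and the central scalars $\T I_2$, and that a central scalar acting on the right can be absorbed into the $\U(2)$ factor on the left since it acts on $D$ by the same (scalar) biholomorphism. Your explicit factorization $\diag(s,t)=(cI_2)\diag(u,\overline u)$ with $c^2=st$ and the subsequent scalar bookkeeping are accurate, and the concluding remark on invariant symbols matches the paper.
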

\begin{proof}
    We observe that $\U(2)\times\T^2$ is generated as a group by $\U(2)\times\T$ and the subgroup
    \[
        \{I_2\}\times\T I_2.
    \]
    But for every $t \in \T$ and $Z \in D$ we have
    \[
        (I_2,tI_2)Z = \overline{t}Z = (\overline{t}I_2,I_2)Z
    \]
    which is a biholomorphism of $D$ already realized by elements of $\U(2)\times\T$. Hence, $\U(2)\times\T^2$ and $\U(2)\times\T$ yield the same transformations on their actions on $D$, and so the result follows.
\end{proof}

The following is now a particular case of \cite[Thm. 6.4]{DOQ}
and can be proved directly in exactly the same way.

\begin{theorem}\label{thm:H-commutativeC*}
    For a closed subgroup $H$ of $\U(2)\times\U(2)$ the following conditions are equivalent for every $\lambda > 3$:
    \begin{enumerate}
      \item The $C^*$-algebra $\cT^{(\lambda)}(\cA^H)$ is commutative.
      \item The restriction $\pi_\lambda|_H$ is multiplicity-free.
    \end{enumerate}
\end{theorem}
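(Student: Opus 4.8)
The plan is to reduce both implications to a single principle: the $H$-invariant Toeplitz operators populate, and in fact weakly generate, the commutant of the restricted representation. Since the excerpt has already observed that $\pi_\lambda$ and $\pi_\lambda'$ share the same isotypic decomposition, condition (2) is equivalent to $\pi_\lambda'|_H$ being multiplicity-free, and I will work throughout with $\pi_\lambda'$, whose action on $D$ is purely geometric. The two facts that drive the argument are: (i) every $H$-invariant Toeplitz operator lies in the commutant $\pi_\lambda'(H)'$; and (ii) by the standard von Neumann algebra characterization, $\pi_\lambda'|_H$ is multiplicity-free if and only if $\pi_\lambda'(H)'$ is commutative.

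To prove (i) I would compute how $\pi_\lambda'$ conjugates a Toeplitz operator. Each $\pi_\lambda'(h)$ is a unitary on $L^2(D,v_\lambda)$ that preserves the subspace $\cH^2_\lambda(D)$, hence commutes with the Bergman projection $B_\lambda$; moreover it conjugates multiplication by $\varphi$ into multiplication by $\varphi\circ h^{-1}$. Writing $T^{(\lambda)}_\varphi = B_\lambda M_\varphi$ then yields
\[
    \pi_\lambda'(h)\, T^{(\lambda)}_\varphi\, \pi_\lambda'(h)^{-1} = T^{(\lambda)}_{\varphi\circ h^{-1}}.
\]
For $\varphi\in\cA^H$ the right-hand side equals $T^{(\lambda)}_\varphi$, so $\cT^{(\lambda)}(\cA^H)\subseteq\pi_\lambda'(H)'$. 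The implication (2)$\Rightarrow$(1) is then immediate: if $\pi_\lambda'|_H$ is multiplicity-free, its commutant is commutative by (ii), and the subalgebra $\cT^{(\lambda)}(\cA^H)$ is commutative as well.

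For (1)$\Rightarrow$(2) the decisive step is to show that the weak operator closure of $\cT^{(\lambda)}(\cA^H)$ is all of $\pi_\lambda'(H)'$. Here I would invoke the weak density of the linear span of all Toeplitz operators $\{T^{(\lambda)}_\psi : \psi\in L^\infty(D)\}$ in $\mathcal{B}(\cH^2_\lambda(D))$, together with the averaging projection
\[
    E(S) = \int_H \pi_\lambda'(h)\, S\, \pi_\lambda'(h)^{-1}\,\dif h,
\]
which maps $\mathcal{B}(\cH^2_\lambda(D))$ onto $\pi_\lambda'(H)'$ and is continuous in the weak topology. By the conjugation identity, $E\big(T^{(\lambda)}_\psi\big) = T^{(\lambda)}_{\widetilde\psi}$ with $\widetilde\psi(Z)=\int_H\psi(h^{-1}Z)\,\dif h$ an $H$-invariant symbol, so $E$ carries the weakly dense span of Toeplitz operators into $\cT^{(\lambda)}(\cA^H)$. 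Consequently the weak closure of $\cT^{(\lambda)}(\cA^H)$ coincides with $\pi_\lambda'(H)'$. Since the weak closure of a commutative $C^*$-algebra of operators remains commutative, commutativity of $\cT^{(\lambda)}(\cA^H)$ forces $\pi_\lambda'(H)'$ to be commutative, and hence $\pi_\lambda'|_H$ to be multiplicity-free by (ii).

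The main obstacle I anticipate is the analytic input behind the third paragraph, namely the weak density of general bounded-symbol Toeplitz operators in $\mathcal{B}(\cH^2_\lambda(D))$ and the surjectivity of the averaging map $E$ onto the commutant; verifying that $E$ intertwines operator conjugation with averaging of symbols is then a routine Fubini argument justified by boundedness of the symbols. By contrast, the representation-theoretic ingredients, the equivalence of multiplicity-freeness with commutativity of the commutant and the geometric behavior of $\pi_\lambda'$, are standard and form the conceptual skeleton on which the proof hangs.
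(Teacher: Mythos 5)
Your argument is correct and is essentially the proof the paper relies on: Theorem~\ref{thm:H-commutativeC*} is stated as a special case of \cite[Thm.~6.4]{DOQ}, whose proof runs exactly along your lines (invariant-symbol Toeplitz operators lie in the commutant of $\pi_\lambda|_H$, that commutant is abelian precisely when the restriction is multiplicity-free, and the converse uses weak density of the span of Toeplitz operators together with averaging over $H$). The density input you flag as the main analytic obstacle is supplied by the cited reference \cite{E92}.
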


As noted in Section~\ref{sec:preliminaries}, the unitary representation $\pi_\lambda$ is multiplicity-free on $\Spe(\U(2)\times\U(2))$ and thus the $C^*$-algebra generated by Toeplitz operators by $\Spe(\U(2)\times\U(2))$-invariant symbols is commutative for every weight $\lambda > 3$. Such operators are also known as radial Toeplitz operators.

On the other hand, it follows from Example~6.5 from \cite{DOQ} that the restriction $\pi_\lambda|_{\T^3}$ is not multiplicity-free, where $\T^3$ is the maximal torus of $\Spe(\U(2)\times\U(2))$ described in Section~\ref{sec:preliminaries}. Hence, we conclude that $\cT^{(\lambda)}(\cA^{\T^3})$ is not commutative for any $\lambda > 3$.

We now consider subgroups $H$ such that $\T^3 \subset H \subset \Spe(\U(2)\times\U(2))$ or, equivalently, subgroups $H$ such that $\T^4 \subset H \subset \U(2)\times\U(2)$. For simplicity, we will assume that $H$ is connected.

\begin{proposition}\label{prop:subgroupsT4U2U2}
    Let $\T^4$ denote the subgroup of diagonal matrices in $\U(2)\times\U(2)$. Then the only connected subgroups strictly between $\U(2)\times\U(2)$ and $\T^4$ are $\U(2)\times\T^2$ and $\T^2\times\U(2)$. In particular, the only connected subgroups strictly between $\Spe(\U(2)\times\U(2))$ and $\T^3$ are $\Spe(\U(2)\times\T^2)$ and $\Spe(\T^2\times\U(2))$.
\end{proposition}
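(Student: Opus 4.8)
The plan is to pass to Lie algebras and exploit the fact that a subalgebra containing a maximal torus is automatically invariant under its adjoint action, which reduces the problem to an elementary root-space decomposition. Since $\U(2)\times\U(2)$ is compact and connected, connected subgroups $H$ with $\T^4 \subseteq H \subseteq \U(2)\times\U(2)$ correspond bijectively, and in an inclusion-preserving way, to Lie subalgebras $\fh$ with $\mathfrak{t} \subseteq \fh \subseteq \fu(2)\oplus\fu(2)$, where $\mathfrak{t} = \mathrm{Lie}(\T^4)$ is the diagonal Cartan subalgebra. So it suffices to classify such $\fh$ and to check that the proper intermediate ones integrate to $\U(2)\times\T^2$ and $\T^2\times\U(2)$.

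First I would record the $\mathrm{ad}(\mathfrak{t})$-module structure. Writing $\fu(2)\oplus\fu(2) = \mathfrak{t}\oplus\mathfrak{m}$ with $\mathfrak{m} = \mathfrak{m}_1\oplus\mathfrak{m}_2$, where $\mathfrak{m}_i$ is the space of off-diagonal skew-Hermitian matrices in the $i$-th factor, each $\mathfrak{m}_i$ is a two-dimensional real irreducible $\mathrm{ad}(\mathfrak{t})$-module on which $\mathfrak{t}$ acts through the (nonzero) root of the corresponding $\su(2)$ factor. Because these two roots are distinct linear functionals on $\mathfrak{t}$, the modules $\mathfrak{m}_1$ and $\mathfrak{m}_2$ are inequivalent, so $\mathfrak{m}$ is a sum of two non-isomorphic irreducibles and its only $\mathrm{ad}(\mathfrak{t})$-invariant subspaces are $0$, $\mathfrak{m}_1$, $\mathfrak{m}_2$, and $\mathfrak{m}$. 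Now any subalgebra $\fh \supseteq \mathfrak{t}$ is $\mathrm{ad}(\mathfrak{t})$-invariant, hence of the form $\fh = \mathfrak{t}\oplus V$ with $V = \fh\cap\mathfrak{m}$ one of these four subspaces. The choices $V = 0$ and $V = \mathfrak{m}$ return $\mathfrak{t}$ and $\fu(2)\oplus\fu(2)$, while the remaining two give $\fu(2)\oplus\mathfrak{t}$ and $\mathfrak{t}\oplus\fu(2)$, which are genuine subalgebras since $\fu(2)$ and the Cartan of the other factor are subalgebras of commuting factors. These integrate exactly to the connected subgroups $\U(2)\times\T^2$ and $\T^2\times\U(2)$, proving the first assertion.

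For the statement about $\SU(2,2)$, I would split off the central ideal $\mathfrak{z} = \R\,(iI_2, iI_2)$, which lies in $\mathfrak{t}$ and satisfies $\fu(2)\oplus\fu(2) = \mathfrak{z}\oplus\fs$, where $\fs$ denotes the Lie algebra of $\Spe(\U(2)\times\U(2))$. Every intermediate subalgebra $\fh$ contains $\mathfrak{z}$, so the map $\fh \mapsto \fh\cap\fs$ is an inclusion-preserving bijection onto the subalgebras between $\mathrm{Lie}(\T^3) = \mathfrak{t}\cap\fs$ and $\fs$; applying it to the two subalgebras found above produces precisely $\mathrm{Lie}(\Spe(\U(2)\times\T^2))$ and $\mathrm{Lie}(\Spe(\T^2\times\U(2)))$, and hence the corresponding connected subgroups.

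The representation-theoretic core, namely identifying the four invariant subspaces, is immediate once the root structure is in place, so the only real care lies in the bookkeeping: confirming that the candidate subalgebras are closed under the bracket (clear from the factorwise description) and that the passage between groups and algebras, and between the $\U(2)\times\U(2)$ and $\Spe(\U(2)\times\U(2))$ pictures, is the asserted order isomorphism. I expect this last correspondence, rather than the subspace count itself, to be the step demanding the most attention.
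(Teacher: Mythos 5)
Your proposal is correct and follows essentially the same route as the paper: pass to Lie algebras, note that any subalgebra containing the Cartan subalgebra is $\mathrm{ad}$-invariant, decompose $\fu(2)\oplus\fu(2)$ into the torus plus two inequivalent irreducible off-diagonal modules (the paper verifies irreducibility by the same explicit bracket computation your root-space language encodes), and read off the four invariant subspaces. Your extra care with the group--algebra correspondence and the reduction to the $\Spe(\U(2)\times\U(2))$ picture only fills in details the paper leaves implicit.
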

\begin{proof}
    It is enough to prove the first claim for the corresponding Lie algebras.

    First note that ($x_1, x_2 \in \R, z \in \C$)
    \[
        \left[
        \begin{pmatrix}
          ix_1 & 0 \\
          0 & ix_2
        \end{pmatrix},
        \begin{pmatrix}
          0 & z \\
          -\overline{z} & 0
        \end{pmatrix}
        \right]
        =
        \begin{pmatrix}
          0 & i(x_1-x_2)z \\
          -\overline{i(x_1-x_2)z} & 0
        \end{pmatrix},
    \]
    which proves that the space
    \[
        V =
        \left\{
            \begin{pmatrix}
              0 & z \\
              -\overline{z} & 0
            \end{pmatrix} : z \in \C
        \right\}
    \]
    is an irreducible $i\R^2$-submodule of $\fu(2)$. Hence, the decomposition of $\fu(2)\times\fu(2)$ into irreducible $i\R^4$-submodules is given by
    \[
        \fu(2)\times\fu(2) = i\R^4 \oplus V\times\{0\}
                    \oplus \{0\} \times V.
    \]

    We conclude that $\fu(2)\times i\R^2$ and $i\R^2\times\fu(2)$ are the only $i\R^4$-submodules strictly between $\fu(2)\times\fu(2)$ and $i\R^4$, and both are Lie algebras.
\end{proof}

There is natural biholomorphism
\begin{align*}
    F : D &\rightarrow D  \\
    Z &\mapsto Z^\top
\end{align*}
that clearly preserves all the weighted measures $\dif v_\lambda$. Hence, $F$ induces a unitary map
\begin{align*}
    F^* : L^2(D,v_\lambda) &\rightarrow L^2(D,v_\lambda) \\
    F^*(f) &= f\circ F^{-1}
\end{align*}
that preserves $\cH^2_\lambda(D)$. And the same expression
\[
    \varphi \mapsto F^*(\varphi) = \varphi\circ F^{-1}
\]
defines an isometric isomorphism on the space $L^\infty(D)$ of essentially bounded symbols.

Furthermore, we consider the automorphism $\rho \in \mathrm{Aut}(\U(2)\times\U(2))$ given by
$\rho(A,B) = (\overline{B},\overline{A})$. Thus, we clearly have
\[
    F((A,B)Z) = F(AZB^{-1}) = \overline{B}Z^\top \overline{A}^{-1}
    = \rho(A,B) F(Z),
\]
for all $(A,B) \in \U(2)\times\U(2)$ and $Z \in D$. In other words, the map $F$ intertwines the $\U(2)\times\U(2)$-action with that of the image of $\rho$.

We observe that $\rho(\U(2)\times\T^2) = \T^2\times\U(2)$. Hence, the previous constructions can be used to prove that both groups define equivalent $C^*$-algebras from invariant Toeplitz operators.

\begin{proposition}\label{prop:U2T2vsT2U2}
    The isomorphism of $L^\infty(D)$ given by $F^*$ maps $\cA^{\U(2)\times\T^2}$ onto $\cA^{\T^2\times\U(2)}$. Furthermore, for every weight $\lambda > 3$ and for every $\varphi \in \cA^{\U(2)\times\T^2}$ we have
    \[
        T^{(\lambda)}_{F^*(\varphi)} = F^*\circ T^{(\lambda)}_\varphi
            \circ (F^*)^{-1}.
    \]
    In particular, the $C^*$-algebras $\cT^{(\lambda)}(\cA^{\U(2)\times\T^2})$ and $\cT^{(\lambda)}(\cA^{\T^2\times\U(2)})$ are unitarily equivalent for every $\lambda > 3$.
\end{proposition}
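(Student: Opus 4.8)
The plan is to exploit the two involutions in play---the transpose map $F$, which satisfies $F^{-1}=F$ and hence $(F^*)^{-1}=F^*$, and the automorphism $\rho$, which satisfies $\rho\circ\rho=\mathrm{id}$ since $\rho(\overline{B},\overline{A})=(A,B)$---together with the already-established intertwining identity $F\circ g=\rho(g)\circ F$ as maps on $D$, valid for every $g\in\U(2)\times\U(2)$.

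First I would settle the set-theoretic claim that $F^*$ carries $\cA^{\U(2)\times\T^2}$ onto $\cA^{\T^2\times\U(2)}$. Fix $\varphi\in\cA^{\U(2)\times\T^2}$ and let $h\in\T^2\times\U(2)$; since $\rho(\U(2)\times\T^2)=\T^2\times\U(2)$ I may write $h=\rho(g)$ with $g\in\U(2)\times\T^2$. Applying the intertwining identity to $\rho(g)$ and using $\rho\circ\rho=\mathrm{id}$ gives $F(\rho(g)Z)=gF(Z)$, so that, with $F^{-1}=F$, one has $F^*(\varphi)(hZ)=\varphi(F(\rho(g)Z))=\varphi(gF(Z))=\varphi(F(Z))=F^*(\varphi)(Z)$ for almost every $Z$, where the third equality is the $\U(2)\times\T^2$-invariance of $\varphi$. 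Hence $F^*(\varphi)\in\cA^{\T^2\times\U(2)}$; because $F^*$ is its own inverse and $\rho$ is an involution, the symmetric argument shows the map is onto.

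The operator identity is where the real content lies, and it rests on three facts. First, $F^*$ is multiplicative on pointwise products, $F^*(\varphi f)=F^*(\varphi)\,F^*(f)$, which is immediate from $F^*(u)=u\circ F^{-1}$. Second---and this is the step I expect to require the most care---the unitary $F^*$ commutes with the Bergman projection $B_\lambda$. This holds because $F^*$ is a unitary of $L^2(D,v_\lambda)$ that preserves the closed subspace $\cH^2_\lambda(D)$; any such unitary preserves the orthogonal complement as well and therefore commutes with the orthogonal projection onto $\cH^2_\lambda(D)$. Granting these, for $f\in\cH^2_\lambda(D)$ I compute $F^*\circ T^{(\lambda)}_\varphi\circ(F^*)^{-1}(f)=F^*\,B_\lambda(\varphi\,F^*(f))=B_\lambda\,F^*(\varphi\,F^*(f))=B_\lambda(F^*(\varphi)\,f)=T^{(\lambda)}_{F^*(\varphi)}(f)$, using $(F^*)^{-1}=F^*$ and $F^*\circ F^*=\mathrm{id}$ at the last step.

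Finally, the statement about $C^*$-algebras is formal: conjugation by the single unitary $F^*$ is a $*$-isomorphism of the algebra of bounded operators on $\cH^2_\lambda(D)$, and by the operator identity it sends each generator $T^{(\lambda)}_\varphi$ with $\varphi\in\cA^{\U(2)\times\T^2}$ to $T^{(\lambda)}_{F^*(\varphi)}$, while by the first part $F^*(\varphi)$ exhausts $\cA^{\T^2\times\U(2)}$. Since a $*$-isomorphism carries the generated $C^*$-algebra onto the generated $C^*$-algebra, I conclude that $\cT^{(\lambda)}(\cA^{\U(2)\times\T^2})$ and $\cT^{(\lambda)}(\cA^{\T^2\times\U(2)})$ are unitarily equivalent via $F^*$. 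The only genuine obstacle is verifying $F^*B_\lambda=B_\lambda F^*$; everything else is bookkeeping with the two involutions.
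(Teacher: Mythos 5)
Your proposal is correct and follows essentially the same route as the paper: both parts rest on the intertwining identity $F\circ(A,B)=\rho(A,B)\circ F$ and on the fact that $F^*$ is a self-inverse unitary of $L^2(D,v_\lambda)$ preserving $\cH^2_\lambda(D)$. The only presentational difference is that you verify the operator identity by first establishing $F^*B_\lambda=B_\lambda F^*$ and using multiplicativity of $F^*$, whereas the paper checks the same identity weakly, pairing $T^{(\lambda)}_{F^*(\varphi)}f$ against $g\in\cH^2_\lambda(D)$ and using unitarity of $F^*$ directly; both verifications are routine and equivalent.
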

\begin{proof}
    From the above computations, for a given $\varphi \in L^\infty(D)$ we have
    \[
        \varphi \circ (A,B) \circ F^{-1}
            = \varphi \circ F^{-1} \circ \rho(A,B)
    \]
    for every $(A,B) \in \U(2)\times\U(2)$. Hence, $\varphi$ is $U(2)\times\T^2$-invariant if and only if $F^*(\varphi)$ is $\T^2\times\U(2)$-invariant. This proves the first part.

    On the other hand, we use that the map $F^*$ is unitary on $L^2(D,v_\lambda)$ to conclude that for every $f,g \in \cH^2_\lambda(D)$ we have
    \begin{align*}
        \left<T^{(\lambda)}_{F^*(\varphi)}(f),g\right>_\lambda
            &= \left<F^*(\varphi)f,g\right>_\lambda \\
            &= \left<(\varphi\circ F^{-1})f,g\right>_\lambda \\
            &= \left<\varphi (f\circ F),g\circ F\right>_\lambda \\
            &= \left<T^{(\lambda)}_{\varphi} \circ (F^*)^{-1}(f),
                (F^*)^{-1}g\right>_\lambda \\
            &= \left<F^* \circ T^{(\lambda)}_{\varphi} \circ (F^*)^{-1}(f),
                g\right>_\lambda,
    \end{align*}
    and this completes the proof.
\end{proof}

\section{$\U(2)\times\T^2$-invariant symbols}\label{sec:U(2)T2}
\noindent
As noted in Section~\ref{sec:preliminaries}, the subgroup $\U(2)\times\U(2)$ does not act faithfully. Hence, it is convenient to consider suitable subgroups for which the action is at least locally faithful. This is particularly important when describing the orbits of the subgroups considered. We also noted before that the most natural choice is to consider subgroups of $\Spe(\U(2)\times\U(2))$, however for our setup it will be useful to consider other subgroups.

For the case of the subgroup $\U(2)\times\T^2$ it turns out that $\U(2)\times\T^2$-invariance is equivalent to $\Spe(\U(2)\times\T^2)$-invariance. This holds for the action through biholomorphisms on $D$ and so for every induced action on function spaces over $D$.

To understand the structure of the $\U(2)\times\T$-orbits the next result provides a choice of a canonical element on each orbit.

\begin{proposition}\label{prop:U2T-orbits}
    For every $Z \in M_{2\times2}(\C)$ there exists $r \in [0,\infty)^3$ and $(A,t) \in \U(2)\times\T$ such that
    \[
        (A,t) Z =
        \begin{pmatrix}
          r_1 & r_2 \\
          0 & r_3
        \end{pmatrix}.
    \]
    Furthermore, if $Z = (Z_1,Z_2)$ satisfies $\det(Z), \left<Z_1,Z_2\right> \not= 0$, then $r$ is unique and $(A,t)$ is unique up to a sign.
\end{proposition}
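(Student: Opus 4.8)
The plan is to reformulate the action columnwise and reduce $Z$ to the required shape by a $QR$-type argument, then read off the normalizing data from invariants. Write $Z = (Z_1,Z_2)$ in terms of its columns $Z_1,Z_2\in\C^2$ and recall that $(A,t)$ acts by $(A,t)Z = AZ\diag(\overline t,t)$, so that the two columns transform independently, as $Z_1\mapsto \overline t\, AZ_1$ and $Z_2\mapsto t\, AZ_2$, with the same $A\in\U(2)$ applied to both. For existence I would first use that every matrix admits a $QR$ decomposition, giving $A\in\U(2)$ with $AZ$ upper triangular, say with entries $s_1,s_2,s_3\in\C$. The key observation is that for any $\mu_1,\mu_2\in\T$ the matrix $\diag(\mu_1,\mu_2)A$ is again unitary and $\diag(\mu_1,\mu_2)AZ$ is still upper triangular, its rows now scaled by $\mu_1,\mu_2$; applying the torus on the right scales the columns by $\overline t,t$. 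Hence the most general matrix reachable is
\[
    \diag(\mu_1,\mu_2)\,A\,Z\,\diag(\overline t,t)
    = \begin{pmatrix} \mu_1 s_1 \overline t & \mu_1 s_2 t \\ 0 & \mu_2 s_3 t \end{pmatrix},
\]
and it remains to choose phases $\mu_1,\mu_2,t\in\T$ making the three nonzero entries nonnegative real, so that $r_i = |s_i|$.

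The phase conditions $\arg(\mu_1 s_1 \overline t)=\arg(\mu_1 s_2 t)=\arg(\mu_2 s_3 t)=0$ form a linear system over $\R/2\pi\Z$ in $\arg\mu_1,\arg\mu_2,\arg t$. Eliminating $\arg\mu_1$ between the first two equations forces $2\arg t = \arg s_1 - \arg s_2$, which is solvable and determines $t$ up to sign; then $\arg\mu_1$ and $\arg\mu_2$ are determined. When some $s_i$ vanishes the corresponding equation simply drops out, leaving more freedom, so existence holds for every $Z\in M_{2\times2}(\C)$. I expect this phase-matching step to be the main obstacle: since $\mu_1$ occurs in two of the three constraints, the phases cannot be fixed one entry at a time, and the parameter count ($\dim(\U(2)\times\T)=5$ against the five real conditions) is tight, so the torus must be spent precisely to resolve the coupling between the $(1,1)$ and $(1,2)$ entries.

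For uniqueness I would argue by invariants. Because $A$ is unitary and $|\overline t|=|t|=1$, the quantities $\|Z_1\|$, $\|Z_2\|$ and $|\langle Z_1,Z_2\rangle|$ are unchanged by the action; evaluating them on the normal form gives $\|Z_1\|=r_1$, $\|Z_2\|^2=r_2^2+r_3^2$ and $|\langle Z_1,Z_2\rangle|=r_1 r_2$, while $|\det Z|=r_1 r_3$. Under the hypotheses $\det Z\neq 0$ and $\langle Z_1,Z_2\rangle\neq 0$ these relations force $r_1,r_2,r_3>0$, and solving them yields $r_1=\|Z_1\|$, $r_2=|\langle Z_1,Z_2\rangle|/\|Z_1\|$ and $r_3=(\|Z_2\|^2-r_2^2)^{1/2}$ uniquely. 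Finally, to see that $(A,t)$ is unique up to sign, I would compute the stabilizer of the normal form $N=\rmatrix$ with all $r_i>0$: the equation $A N \diag(\overline t,t)=N$ gives $\diag(\overline t,t)^*\,(N^*N)\,\diag(\overline t,t)=N^*N$, so $\diag(\overline t,t)$ must commute with $N^*N$, whose off-diagonal entry equals $r_1 r_2\neq 0$; this yields $\overline t^2=1$, hence $t=\pm1$ and correspondingly $A=\pm I_2$. Thus the stabilizer is $\{(I_2,1),(-I_2,-1)\}$, which is exactly uniqueness of $(A,t)$ up to a sign.
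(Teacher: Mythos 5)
Your proof is correct and follows essentially the same route as the paper: a unitary (QR-type) reduction to upper-triangular form followed by a phase adjustment in which the torus resolves the coupling between the $(1,1)$ and $(1,2)$ entries (the paper's choice $t^2\left<Z_2,Z_1\right>\geq 0$ is exactly your condition $2\arg t=\arg s_1-\arg s_2$). Your uniqueness step is organized a little differently --- you pin down $r$ first via the invariants $\|Z_1\|$, $\|Z_2\|$, $|\left<Z_1,Z_2\right>|$, $|\det Z|$ and then compute the stabilizer of the normal form through $N^*N$, whereas the paper compares the two normal forms entrywise after forming $(A'A^{-1},t'\overline{t})$ --- but the content is the same.
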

\begin{proof}
    First assume that $\det(Z) = 0$, so that we can write $Z = (au, bu)$ for some unitary vector $u \in \C^2$ and for $a,b \in \C$. For $Z = 0$ the claim is trivial. If either $a$ or $b$ is zero, but not both, then we can choose $A \in \U(2)$ that maps the only nonzero column into a positive multiple of $e_1$ and the result follows. Finally, we assume that $a$ and $b$ are both non-zero. In this case, choose $A \in \U(2)$ such that $A(au) = |a|e_1$ and $t \in \T$ such that
    \[
        t^2 = \frac{a|b|}{b|a|}.
    \]
    Then, one can easily check that
    \[
        (tA,t)Z =
            \begin{pmatrix}
              |a| & |b| \\
              0 & 0
            \end{pmatrix}.
    \]

    Let us now assume that $\det(Z) \not= 0$. From the unit vector
    \[
        \begin{pmatrix}
          a \\
          b
        \end{pmatrix} = \frac{Z_1}{|Z_1|},
    \]
    we define
    \[
        A =
        \begin{pmatrix}
          \overline{a} & \overline{b} \\
          -b & a
        \end{pmatrix} \in \SU(2).
    \]
    Then, it follows easily that we have
    \[
        AZ =
        \begin{pmatrix}
            |Z_1| & \frac{1}{|Z_1|}\left<Z_2,Z_1\right> \\
            0 & \frac{1}{|Z_1|} \det(Z)
        \end{pmatrix}.
    \]
    If $s,t \in \T$ are given, then we have
    \[
        \left(
        \begin{pmatrix}
            t & 0 \\
            0 & s
        \end{pmatrix}A, t\right)Z =
        \begin{pmatrix}
            |Z_1| & \frac{t^2}{|Z_1|}\left<Z_2,Z_1\right> \\
            0 & \frac{st}{|Z_1|} \det(Z)
        \end{pmatrix}.
    \]
    Hence, it is enough to choose $s,t \in \T$ so that $r_2 = t^2\left<Z_2,Z_1\right>$ and $r_3 = st\det(Z)$ are both non-negative to complete the existence part with $r_1 = |Z_1|$.

    For the uniqueness, let us assume that $\det(Z),\left<Z_1,Z_2\right> \not= 0$ and besides the identity in the statement assume that we also have
    \[
        (A',t') Z =
        \begin{pmatrix}
          r_1' & r_2' \\
          0 & r_3'
        \end{pmatrix},
    \]
    with the same restrictions. Then, we obtain the identity
    \begin{equation}\label{eq:U2T-r}
        (A'A^{-1},t'\overline{t})
        \begin{pmatrix}
          r_1 & r_2 \\
          0 & r_3
        \end{pmatrix} =
        \begin{pmatrix}
          r_1' & r_2' \\
          0 & r_3'
        \end{pmatrix}.
    \end{equation}
    This implies that $A'A^{-1}$ is a diagonal matrix of the form
    \[
        \begin{pmatrix}
          a & 0 \\
          0 & b
        \end{pmatrix}
    \]
    with $a,b \in \T$. Then, taking the determinant of \eqref{eq:U2T-r} we obtain $abr_1r_3 = r_1'r_3'$, which implies that $ab = 1$. If we now use the identities from the entries in \eqref{eq:U2T-r}, then one can easily conclude that $r = r'$ and $(A',t') = \pm (A,t)$.
\end{proof}

The following result is an immediate consequence.

\begin{corollary}
    Let $\varphi \in L^\infty(D)$ be given. Then, $\varphi$ is $\U(2)\times\T^2$-invariant if and only if for a.e.~$Z \in D$ we have
    \[
        \varphi(Z) =
            \varphi\left(
                \begin{matrix}
                  r_1 & r_2 \\
                  0 & r_3
                \end{matrix}
                \right)
    \]
    where $r=(r_1,r_2,r_3)$ are the (essentially) unique values obtained from $Z$ in Proposition~\ref{prop:U2T-orbits}.
\end{corollary}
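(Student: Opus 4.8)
The plan is to recognize this corollary as a direct translation of Proposition~\ref{prop:U2T-orbits} into the language of invariant symbols, together with Lemma~\ref{lem:U2T-invariance} which lets us freely replace $\U(2)\times\T^2$-invariance by $\U(2)\times\T$-invariance. First I would reduce to the latter: by Lemma~\ref{lem:U2T-invariance} the two invariance conditions are identical, so it suffices to characterize $\U(2)\times\T$-invariant symbols. The whole content is then that the upper-triangular matrix with entries $r=(r_1,r_2,r_3)$ furnished by Proposition~\ref{prop:U2T-orbits} is a canonical representative of the $\U(2)\times\T$-orbit of $Z$.

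For the forward direction, suppose $\varphi$ is $\U(2)\times\T$-invariant. Given $Z\in D$, Proposition~\ref{prop:U2T-orbits} produces $(A,t)\in\U(2)\times\T$ with $(A,t)Z$ equal to the displayed upper-triangular matrix. Invariance says $\varphi(Z)=\varphi((A,t)Z)$ for a.e.\ $Z$, which is precisely the claimed identity. The only subtlety is the ``almost everywhere'' bookkeeping: invariance is stated as $\varphi(hZ)=\varphi(Z)$ for each fixed $h$ off a null set depending on $h$, whereas here the group element $(A,t)$ varies with $Z$. I would handle this by noting that the set where $\det(Z)=0$ or $\langle Z_1,Z_2\rangle=0$ is a proper analytic subvariety, hence null, so outside it the representative $r$ is essentially unique by the uniqueness clause of Proposition~\ref{prop:U2T-orbits}, and the map $Z\mapsto r$ is the well-defined quotient map to the orbit space; the identity then holds a.e.

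For the converse, suppose $\varphi$ agrees a.e.\ with the function $Z\mapsto\varphi\bigl(\begin{smallmatrix} r_1 & r_2\\ 0 & r_3\end{smallmatrix}\bigr)$ where $r$ is extracted from $Z$. Since $r$ depends only on the $\U(2)\times\T$-orbit of $Z$ (any two elements of the same orbit yield the same canonical representative, again by the uniqueness in Proposition~\ref{prop:U2T-orbits}), for any fixed $(A,t)$ the points $Z$ and $(A,t)Z$ produce the same $r$, so $\varphi((A,t)Z)=\varphi(Z)$ for a.e.\ $Z$. This is exactly $\U(2)\times\T$-invariance, which by Lemma~\ref{lem:U2T-invariance} is the same as $\U(2)\times\T^2$-invariance.

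I expect the main obstacle to be purely the measure-theoretic justification that the pointwise orbit statement upgrades correctly to an a.e.\ statement about the symbol, rather than any genuinely new idea: the essential uniqueness of $r$ holds only away from the null set $\{\det Z=0\}\cup\{\langle Z_1,Z_2\rangle=0\}$, and one must confirm that the assignment $Z\mapsto r$ is measurable so that the right-hand side genuinely defines an element of $L^\infty(D)$. Both points are routine given that the bad set is a null analytic subvariety and that the construction in Proposition~\ref{prop:U2T-orbits} is explicit and continuous on its complement, so the corollary follows immediately once this is noted.
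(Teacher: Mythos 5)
Your proposal is correct and takes the same route the paper intends: the paper states this corollary with no proof at all (``an immediate consequence''), and your argument---reducing to $\U(2)\times\T$-invariance via Lemma~\ref{lem:U2T-invariance} and then using the canonical orbit representative and essential uniqueness from Proposition~\ref{prop:U2T-orbits}---is exactly the fleshed-out version of that. The measure-theoretic bookkeeping you flag (the bad set $\{\det Z=0\}\cup\{\langle Z_1,Z_2\rangle=0\}$ being null and the a.e.\ upgrade) is indeed the only content beyond the proposition itself, and the paper silently elides it.
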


\section{Toeplitz operators with $\U(2)\times\T^2$-invariant symbols}
\noindent
As noted in Section~\ref{sec:preliminaries}, for every $\lambda > 3$ the restriction of $\pi_\lambda$ to $\R\times\SU(2)\times\R\times\SU(2)$ is multiplicity-free. We start this section by providing an explicit description of the corresponding isotypic decomposition.

Let us consider the following set of indices
\[
    \rN^2 = \{\nu=(\nu_1,\nu_2) \in \Z^2 : \nu_1 \geq \nu_2 \geq 0 \}.
\]
Then, for every $\nu \in \rN^2$, we let $F_\nu$ denote the complex irreducible $\SU(2)$-module with dimension $\nu_1-\nu_2 + 1$. For example, $F_\nu$ can be realized as the $\SU(2)$-module given by $\mathrm{Sym}^{\nu_1-\nu_2}(\C^2)$ or by the space of homogeneous polynomials in two complex variables and degree $\nu_1-\nu_2$. Next, we let the center $\T I_2$ of $\U(2)$ act on the space $F_\nu$ by the character $t \mapsto t^{\nu_1 + \nu_2}$. It is easy to check that the actions on $F_\nu$ of $\SU(2)$ and $\T I_2$ are the same on their intersection $\{\pm I_2\}$. This turns $F_\nu$ into a complex irreducible $\U(2)$-module. We note (and will use without further remarks) that the $\U(2)$-module structure of $F_\nu$ can be canonically extended to a module structure over $\GL(2,\C)$.

We observe that the dual $F_\nu^*$ as $\U(2)$-module is realized by the same space with the same $\SU(2)$-action but with the action of the center $\T I_2$ now given by the character $t \mapsto t^{-\nu_1-\nu_2}$.

If $V$ is any $\R\times\SU(2)\times\R\times\SU(2)$-module, then for every $\lambda$ we consider a new $\R\times\SU(2)\times\R\times\SU(2)$-module given by the action
\begin{equation}\label{eq:Vlambda}
    (x,A,y,B)\cdot v = e^{i\lambda(y-x)} (x,A,y,B)v
\end{equation}
where $(x,A,y,B) \in \R\times\SU(2)\times\R\times\SU(2)$, $v \in V$ and the action of $(x,A,y,B)$ on $v$ on the left-hand side is given by the original structure of $V$. We will denote by $V_\lambda$ this new $\R\times\SU(2)\times\R\times\SU(2)$-module structure.

In particular, for every $\nu \in \rN^2$ the space $F_\nu^*\otimes F_\nu$ is an irreducible module over $\U(2)\times\U(2)$ and, for every $\lambda > 3$, the space $(F_\nu^*\otimes F_\nu)_\lambda$ is an irreducible module over $\R\times\SU(2)\times\R\times\SU(2)$. Note that two such modules defined for $\nu, \nu' \in \rN^2$ are isomorphic (over the corresponding group) if and only if $\nu=\nu'$.

\begin{proposition}\label{prop:U2U2-isotypic}
    For every $\lambda > 3$, the isotypic decomposition of the restriction of $\pi_\lambda$ to $\R\times\SU(2)\times\R\times\SU(2)$ is given by
    \[
        \cH^2_\lambda(D) \cong
                \bigoplus_{\nu \in \rN^2} (F_\nu^*\otimes F_\nu)_\lambda,
    \]
    and this decomposition is multiplicity-free. With respect to this isomorphism and for every $d \in \N$, the subspace $\cP^d(M_{2\times2}(\C))$ corresponds to the sum of the terms for $\nu$ such that $|\nu| = d$.
    Furthermore, for the Cartan subalgebra given by the diagonal matrices of $\fu(2)\times\fu(2)$ and a suitable choice of positive roots, the irreducible $\R\times\SU(2)\times\R\times\SU(2)$-submodule of $\cH^2_\lambda(D)$ corresponding to $(F_\nu^*\otimes F_\nu)_\lambda$ has a highest weight vector given by
    \[
        p_\nu(Z) = z_{11}^{\nu_1-\nu_2}\det(Z)^{\nu_2},
    \]
    for every $\nu \in \rN^2$.
\end{proposition}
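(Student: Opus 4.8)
The plan is to strip away the scalar twist, reduce the statement to a decomposition of each space of homogeneous polynomials under the honest $\U(2)\times\U(2)$-action, and then identify the constituents by exhibiting highest weight vectors and matching dimensions.

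First I would combine the orthogonal decomposition $\cH^2_\lambda(D) = \bigoplus_d \cP^d(M_{2\times2}(\C))$ with the explicit restriction formula for $\pi_\lambda$. For $f \in \cP^d$ homogeneous of degree $d$, homogeneity gives $(\pi_\lambda(x,A,y,B)f)(Z) = e^{i(\lambda+d)(y-x)}f(A^{-1}ZB)$, so on each $\cP^d$ the two $\R$-factors act by the single scalar $e^{i(\lambda+d)(y-x)}$ while $\SU(2)\times\SU(2)$ acts by $f \mapsto f(A^{-1}ZB)$. This is exactly the relation between $\pi_\lambda$ and $\pi_\lambda'$ noted in Section~\ref{sec:preliminaries}: $\pi_\lambda$ is $\pi_\lambda'$ pulled back along the covering map and twisted by $e^{i\lambda(y-x)}$. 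Hence it suffices to decompose each $\cP^d$ under the $\U(2)\times\U(2)$-action $\pi_\lambda'(A,B)f = f(A^{-1}ZB)$; applying the twist \eqref{eq:Vlambda} afterwards turns each $\U(2)\times\U(2)$-constituent $F_\nu^*\otimes F_\nu$ into $(F_\nu^*\otimes F_\nu)_\lambda$, which is what produces the subscript $\lambda$ in the statement.

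Next I would produce the constituents of $\cP^d$ through their highest weight vectors. Taking the Cartan subalgebra of diagonal matrices in $\fu(2)\times\fu(2)$, a direct computation of the infinitesimal action shows that the raising operator of the second $\fsl_2$ annihilates both $z_{11}$ and $\det(Z)$, hence annihilates $p_\nu = z_{11}^{\nu_1-\nu_2}\det(Z)^{\nu_2}$; for the first $\fsl_2$ the same holds provided one uses the opposite (lower-triangular) Borel, which is precisely the content of the phrase ``suitable choice of positive roots.'' Evaluating the diagonal torus on $p_\nu$ gives weight $(-\nu_1,-\nu_2)$ on the first factor and $(\nu_1,\nu_2)$ on the second; in particular the two central circles $\T I_2$ act by $t\mapsto t^{-|\nu|}$ and $t\mapsto t^{|\nu|}$, respectively. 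This is exactly the central-character and highest-weight data of $F_\nu^*\otimes F_\nu$, so the cyclic submodule generated by $p_\nu$ is a copy of $F_\nu^*\otimes F_\nu$, of dimension $(\nu_1-\nu_2+1)^2$, sitting inside $\cP^{|\nu|}$.

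Finally I would close the argument with a dimension count. For distinct $\nu$ these submodules are non-isomorphic (they differ in central character or in $\SU(2)$-dimension), so their sum inside $\cP^d$ is direct; since $\sum_{|\nu|=d}(\nu_1-\nu_2+1)^2 = \binom{d+3}{3} = \dim\cP^d(M_{2\times2}(\C))$, these constituents exhaust $\cP^d$ and each occurs with multiplicity one. Reassembling over all $d$ and applying the twist yields $\cH^2_\lambda(D) \cong \bigoplus_{\nu\in\rN^2}(F_\nu^*\otimes F_\nu)_\lambda$, multiplicity-free, with $\cP^d$ the sum of the terms satisfying $|\nu|=d$ and with highest weight vectors $p_\nu$, as claimed. (As a conceptual check, the polynomial decomposition is the Cauchy identity, i.e.\ $\GL_2$--$\GL_2$ Howe duality for $\mathrm{Sym}((\C^2)^*\otimes\C^2)$, giving $\bigoplus_\nu F_\nu^*\otimes F_\nu$ directly.) I expect the main obstacle to be the bookkeeping of contragredients: one must verify carefully that the first factor is $F_\nu^*$ rather than $F_\nu$ and that $p_\nu$ is a genuine highest weight vector only after passing to the opposite Borel on that factor. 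That is where the representation-theoretic care lies, whereas the raising-operator computations and the dimension count are routine.
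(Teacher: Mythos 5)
Your proposal is correct, but it reaches the decomposition by a genuinely different route from the paper. The first reduction is identical: both you and the paper strip the twist $e^{i\lambda(y-x)}$ to pass from $\pi_\lambda$ to $\pi_\lambda'$ and reduce to decomposing each $\cP^d(M_{2\times2}(\C))$ under $\U(2)\times\U(2)$. From there the paper complexifies the action to $\GL(2,\C)\times\GL(2,\C)$, invokes Zariski density of $\U(2)$ to transfer irreducibility and isotypic data, and then cites Theorem~5.6.7 of \cite{GW} (the $\GL_2$--$\GL_2$ duality you mention only as a ``conceptual check'') both for the decomposition $\cP^d \cong \bigoplus_{|\nu|=d} F_\nu^*\otimes F_\nu$ and for the identification of the highest weight vectors $p_\nu$. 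You instead give a self-contained argument: verify that $p_\nu$ is a weight vector of weight $((-\nu_1,-\nu_2),(\nu_1,\nu_2))$ annihilated by the lower-triangular raising operator on the left and the upper-triangular one on the right, identify the cyclic module it generates as $F_\nu^*\otimes F_\nu$ via its central characters and $\SU(2)$-type, and exhaust $\cP^d$ by the count $\sum_{|\nu|=d}(\nu_1-\nu_2+1)^2 = \binom{d+3}{3}$, which checks out for both parities of $d$. What your approach buys is transparency --- it makes explicit exactly which Borel makes $p_\nu$ a highest weight vector and why the first tensor factor is the contragredient, points the paper delegates to the reference --- at the cost of a longer argument; the paper's citation-based proof is shorter but leaves the reader to unwind the conventions of \cite{GW}. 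Both establish multiplicity-freeness the same way in the end, by observing that the constituents for distinct $\nu$ are pairwise non-isomorphic.
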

\begin{proof}
    By the remarks in Section~\ref{sec:preliminaries} we can consider the representation $\pi_\lambda'$. Furthermore, it was already mentioned in that section that $\cP^d(M_{2\times2}(\C))$ is $\R\times\SU(2)\times\R\times\SU(2)$-invariant and so we compute its decomposition into irreducible submodules. In what follows we consider both $\pi_\lambda$ and $\pi_\lambda'$ always restricted to $\R\times\SU(2)\times\R\times\SU(2)$. We also recall that for $\pi_\lambda'$ we already have an action for $\U(2)\times\U(2)$ without the need of passing to the universal covering group.

    Note that the representation $\pi_\lambda'$ on each $\cP^d(M_{2\times2}(\C))$ naturally extends with the same expression from $\U(2)\times\U(2)$ to $\GL(2,\C)\times\GL(2,\C)$. This action is regular in the sense of representations of algebraic groups. By the Zariski density of $\U(2)$ in $\GL(2,\C)$ it follows that invariance and irreducibility of subspaces as well as isotypic decompositions with respect to either $\U(2)$ or $\GL(2,\C)$ are the same for $\pi_\lambda'$ in $\cP^d(M_{2\times2}(\C))$. Hence, we can apply Theorem~5.6.7 from \cite{GW} (see also \cite{Johnson80}) to conclude that
    \[
        \cP^d(M_{2\times2}(\C)) \cong
                \bigoplus_{\substack{\nu \in \rN^2 \\ |\nu| = d}} F_\nu^*\otimes F_\nu
    \]
    as $\U(2)\times\U(2)$-modules for the representation $\pi_\lambda'$. Since the representations $\pi_\lambda$ and $\pi_\lambda'$ differ by the factor $e^{i\lambda(y-x)}$ for elements of the form $(x,A,y,B)$, taking the sum over $d \in \N$ we obtain the isotypic decomposition of $\cH^2_\lambda(D)$ as stated. This is multiplicity-free as a consequence of the remarks in this section.

    Finally, the claim on highest weight vectors is contained in the proof of Theorem~5.6.7 from \cite{GW}, and it can also be found in \cite{Johnson80}.
\end{proof}

We now consider the subgroup $\U(2)\times\T^2$. Note that the subgroup of $\R\times\SU(2)\times\R\times\SU(2)$ corresponding to $\U(2)\times\T^2$ is realized by $\R\times\SU(2)\times\R\times\T$ with covering map given by the expression
\[
    (x,A,y,t) \mapsto
        \left(e^{ix}A,
            e^{iy}
            \begin{pmatrix}
              t & 0 \\
              0 & \overline{t}
            \end{pmatrix}
            \right).
\]
In particular, the action of $\R\times\SU(2)\times\R\times\T$ on $D$ is given by
\[
    (x,A,y,t) Z = e^{i(x-y)}AZ
        \begin{pmatrix}
          t & 0 \\
          0 & \overline{t}
        \end{pmatrix},
\]
and the representation $\pi_\lambda$ restricted to $\R\times\SU(2)\times\R\times\T$ is given by
\[
    (\pi_\lambda(x,A,y,t)f)(Z) = e^{i\lambda(y-x)}
        f\left(e^{i(y-x)}A^{-1}Z
            \begin{pmatrix}
              t & 0 \\
              0 & \overline{t}
            \end{pmatrix}\right).
\]

We recall that for any Cartan subgroup of $\U(2)$ we have a weight space decomposition
\[
    F_\nu = \bigoplus_{j=0}^{\nu_1-\nu_2} F_\nu(\nu_1-\nu_2 - 2j),
\]
where $F_\nu(k)$ denotes the $1$-dimensional weight space corresponding to the weight $k = -\nu_1+\nu_2, -\nu_1+\nu_2 +2, \dots, \nu_1-\nu_2 - 2, \nu_1-\nu_2$. For simplicity, we will always consider the Cartan subgroup $\T^2$ of $\U(2)$ given by its subset of diagonal matrices. We conclude that $F_\nu(k)$ is isomorphic, as a $\T^2$-module, to the $1$-dimensional representation corresponding to the character $(t_1,t_2)\mapsto t_1^{\nu_2}t_2^k$. We will denote by $\C_{(m_1,m_2)}$ the $1$-dimensional $\T^2$-module defined by the character $(t_1,t_2) \mapsto t_1^{m_1}t_2^{m_2}$, where $(m_1,m_2) \in \Z^2$. In particular, we have $F_\nu(k) \cong \C_{(\nu_2,k)}$ for every $k = -\nu_1+\nu_2, -\nu_1+\nu_2 +2, \dots, \nu_1-\nu_2 - 2, \nu_1-\nu_2$.

Using the previous notations and remarks we can now describe the isotypic decomposition for the restriction of $\pi_\lambda$ to $\R\times\SU(2)\times\R\times\T$. As before, for a module $V$ over the group $\R\times\SU(2)\times\R\times\T$ we will denote by $V_\lambda$ the module over the same group obtained by the expression \eqref{eq:Vlambda}.

\begin{proposition}\label{prop:U2T2-isotypic}
    For every $\lambda > 3$, the isotypic decomposition of the restriction of $\pi_\lambda$ to $\R\times\SU(2)\times\R\times\T$ is given by
    \[
        \cH^2_\lambda(D) \cong
            \bigoplus_{\nu \in \rN^2}
                \bigoplus_{j=0}^{\nu_1-\nu_2}
                    (F_\nu^*\otimes \C_{(\nu_2,\nu_1-\nu_2-2j)})_\lambda,
    \]
    and this decomposition is multiplicity-free.
    Furthermore, for the Cartan subalgebra given by the diagonal matrices of $\fu(2)\times i\R^2$ and a suitable choice of positive roots, the irreducible $\R\times\SU(2)\times\R\times\T$-submodule of $\cH^2_\lambda(D)$ corresponding to $(F_\nu^*\otimes \C_{(\nu_2,\nu_1-\nu_2-2j)})_\lambda$ has a highest weight vector given by
    \[
        p_{\nu,j}(Z) = z_{11}^{\nu_1-\nu_2-j}z_{12}^j\det(Z)^{\nu_2},
    \]
    for every $\nu \in \rN^2$ and $j = 0, \dots, \nu_1-\nu_2$.
\end{proposition}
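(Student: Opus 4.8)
The plan is to derive this decomposition from the one already established in Proposition~\ref{prop:U2U2-isotypic} by restricting only the last $\SU(2)$-factor to its maximal torus $\T$. Since the passage from $\R\times\SU(2)\times\R\times\SU(2)$ to $\R\times\SU(2)\times\R\times\T$ affects only the second tensor factor, and since restriction commutes with direct sums, it suffices to restrict each summand $(F_\nu^*\otimes F_\nu)_\lambda$ individually. The first factor $F_\nu^*$ and the twist by $e^{i\lambda(y-x)}$ from \eqref{eq:Vlambda} are left untouched, while the second factor decomposes under $\T$ according to the weight-space decomposition $F_\nu=\bigoplus_{j=0}^{\nu_1-\nu_2}F_\nu(\nu_1-\nu_2-2j)$ recalled before the statement, together with the identification $F_\nu(\nu_1-\nu_2-2j)\cong\C_{(\nu_2,\nu_1-\nu_2-2j)}$ of $\T^2$-modules. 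Substituting this into the sum over $\nu$ and reapplying the $\lambda$-twist yields the claimed decomposition, each summand $(F_\nu^*\otimes\C_{(\nu_2,\nu_1-\nu_2-2j)})_\lambda$ being irreducible as the tensor product of the irreducible $\SU(2)$-module $F_\nu^*$ with a one-dimensional character of the remaining factors.

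For multiplicity-freeness I would use that the new decomposition refines a decomposition that is already multiplicity-free, so it is enough to check that distinct pairs $(\nu,j)$ and $(\nu',j')$ give non-isomorphic summands. Restriction to the first two slots $\R\times\SU(2)$ determines $\nu$ (through the $\SU(2)$-type $\nu_1-\nu_2$ together with the central character recorded by the first $\R$-factor), so an isomorphism of summands forces $\nu=\nu'$. With $\nu$ fixed, the characters $\C_{(\nu_2,\nu_1-\nu_2-2j)}$ are pairwise distinct for $j=0,\dots,\nu_1-\nu_2$, since their second coordinates $\nu_1-\nu_2-2j$ are distinct, and this forces $j=j'$. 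Hence no two summands coincide.

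For the highest weight vectors I would connect $p_{\nu,j}$ to the highest weight vector $p_\nu(Z)=z_{11}^{\nu_1-\nu_2}\det(Z)^{\nu_2}$ of Proposition~\ref{prop:U2U2-isotypic}, observing that $p_{\nu,0}=p_\nu$. The first step is to record the infinitesimal action on polynomials coming from $\pi_\lambda'$, under which $(A,B)$ acts by $f\mapsto f(A^{-1}ZB)$, and to fix, for the Cartan subalgebra of diagonal matrices in $\fu(2)\times\fu(2)$, the raising operator of the first $\SU(2)$ (acting on the left) and the lowering operator of the second $\SU(2)$ (acting on the right) compatible with the positive system for which $p_\nu$ is highest. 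A direct differentiation then shows that the second-factor lowering operator sends $p_{\nu,j}$ to a nonzero multiple of $p_{\nu,j+1}$, so that the $p_{\nu,j}$ run through the weight vectors of the $F_\nu$-factor and, in particular, $p_{\nu,j}$ lies in the summand indexed by $(\nu,j)$; a second differentiation shows that each $p_{\nu,j}$ is annihilated by the first-factor raising operator. Computing the weights of $p_{\nu,j}$ under the Cartan of $\fu(2)\times i\R^2$ then exhibits it as a highest weight vector of the irreducible $\R\times\SU(2)\times\R\times\T$-submodule isomorphic to $(F_\nu^*\otimes\C_{(\nu_2,\nu_1-\nu_2-2j)})_\lambda$.

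The main obstacle I anticipate is bookkeeping rather than conceptual: fixing the conventions (left versus right action, and the precise positive system for which $F_\nu^*$ in the first slot has $p_\nu$ as highest weight vector) so that the two differentiations come out consistently, and checking that the second-factor lowering operator neither annihilates $p_{\nu,j}$ prematurely nor produces the $\det(Z)$-derivative terms that would spoil the simple monomial form of $p_{\nu,j+1}$. Once the correct operators are identified these calculations collapse, because the $\det(Z)$-terms cancel in pairs, but getting the signs and the Borel right is the delicate point.
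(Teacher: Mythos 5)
Your proposal is correct and follows essentially the same route as the paper: restrict the last factor to $\T$, decompose $F_\nu$ into its weight spaces $F_\nu(\nu_1-\nu_2-2j)\cong\C_{(\nu_2,\nu_1-\nu_2-2j)}$, reapply the $\lambda$-twist, and generate the $p_{\nu,j}$ by repeatedly applying the lowering operator of the second factor to $p_\nu=p_{\nu,0}$. The only cosmetic difference is that the paper avoids your second differentiation by noting that this lowering operator commutes with the $\U(2)\times\{I_2\}$-action and hence carries highest weight vectors to highest weight vectors.
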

\begin{proof}
    We build from Proposition~\ref{prop:U2U2-isotypic} and its proof so we follow their notation.

    As noted above in this section we have a weight space decomposition
    \[
        F_\nu = \bigoplus_{j=0}^{\nu_1-\nu_2} F_\nu(\nu_1-\nu_2 - 2j)
            \cong \bigoplus_{j=0}^{\nu_1-\nu_2} \C_{(\nu_2,\nu_1-\nu_2 - 2j)},
    \]
    where the isomorphism holds term by term as modules over the Cartan subgroup $\T^2$ of diagonal matrices of $\U(2)$. It follows from this and Proposition~\ref{prop:U2U2-isotypic} that we have an isomorphism
    \[
        \cH^2_\lambda(D) \cong
            \bigoplus_{\nu \in \rN^2}
                \bigoplus_{j=0}^{\nu_1-\nu_2}
                    F_\nu^*\otimes \C_{(\nu_2,\nu_1-\nu_2-2j)},
    \]
    of modules over $\U(2)\times\T^2$ for the restriction of $\pi_\lambda'$ to this subgroup. Hence, with the introduction of the factor $e^{i\lambda(y-x)}$ from \eqref{eq:Vlambda} we obtain the isomorphism of modules over $\R\times\SU(2)\times\R\times\T$ for the restriction of $\pi_\lambda$ to this subgroup. This proves the first part of the statement.

    We also note that the modules $(F_\nu^*\otimes \C_{(\nu_2,\nu_1-\nu_2-2j)})_\lambda$ are clearly irreducible over $\R\times\SU(2)\times\R\times\T$ and non-isomorphic for different values of $\nu$ and $j$. Hence, the restriction of $\pi_\lambda$ to $\R\times\SU(2)\times\R\times\T$ is multiplicity-free.

    On the other hand, the proof of Theorem~5.6.7 from \cite{GW}, on which that of Proposition~\ref{prop:U2U2-isotypic} is based, considers the Cartan subalgebra defined by diagonal matrices in $\fu(2)\times\fu(2)$ and the order on roots for which the positive roots correspond to matrices of the form $(X,Y)$ with $X$ lower triangular and $Y$ upper triangular. With these choices, for every $\nu \in \rN^2$, the highest weight vector $p_\nu(Z)$ from Proposition~\ref{prop:U2U2-isotypic} lies in the subspace corresponding to the tensor product of two highest weight spaces. Hence, $p_\nu(Z)$ lies in the subspace corresponding to $(F_\nu^*\otimes \C_{(\nu_2,\nu_1-\nu_2)})_\lambda$. In particular, $p_\nu(Z)$ is a highest weight vector for $(F_\nu^*\otimes \C_{(\nu_2,\nu_1-\nu_2)})_\lambda$.

    It is well known from the description of the representations of $\fsl(2,\C)$ that the element
    \[
        Y =
        \begin{pmatrix}
          0 & 0 \\
          1 & 0
        \end{pmatrix} \in \fsl(2,\C)
    \]
    acts on $F_\nu$ so that it maps
    \[
        F_\nu(\nu_1-\nu_2-2j) \rightarrow F_\nu(\nu_1-\nu_2-2j-2)
    \]
    isomorphically for every $j = 0, \dots, \nu_1-\nu_2-1$. This holds for the order where the upper triangular matrices in $\fsl(2,\C)$ define positive roots. Since the action of $\U(2)\times\{I_2\}$ commutes with that of $Y$ it follows that the element $(0,Y) \in \fsl(2,\C)\times\fsl(2,\C)$ maps a highest weight vector of $F_\nu^*\otimes \C_{(\nu_2,\nu_1-\nu_2-2j)}$ onto a highest weight vector of $F_\nu^*\otimes \C_{(\nu_2,\nu_1-\nu_2-2j-2)}$. Hence, a straightforward computation that applies $j$-times the element $(0,Y)$ starting from $p_\nu(Z)$ shows that the vector
    \[
        p_{\nu,j}(Z) = z_{11}^{\nu_1-\nu_2-j}z_{12}^j\det(Z)^{\nu_2}
    \]
    defines a highest weight vector for the submodule corresponding to space $F_\nu^*\otimes \C_{(\nu_2,\nu_1-\nu_2-2j)}$ for the representation $\pi_\lambda'$ restricted to $\R\times\SU(2)\times\R\times\T$. Again, it is enough to consider the factor from \eqref{eq:Vlambda} to conclude the claim on the highest weight vectors for $\pi_\lambda$ restricted to $\R\times\SU(2)\times\R\times\T$.
\end{proof}

As a consequence we obtain the following result.

\begin{theorem}\label{thm:Toeplitz-U2T2}
    For every $\lambda > 3$, the $C^*$-algebra $\cT^{(\lambda)}(\cA^{\U(2)\times\T^2})$ generated by Toeplitz operators with essentially bounded $\U(2)\times\T^2$-invariant symbols is commutative. Furthermore, if $H$ is a connected subgroup between $\T^4$ and $\U(2)\times\U(2)$ such that $\cT^{(\lambda)}(\cA^H)$ is commutative, then $H$ is either of $\U(2)\times\U(2)$, $\U(2)\times\T^2$ or $\T^2\times\U(2)$. Also, for the last two choices of $H$, the corresponding $C^*$-algebras $\cT^{(\lambda)}(\cA^H)$ are unitarily equivalent.
\end{theorem}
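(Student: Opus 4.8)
The plan is to assemble the three assertions from the multiplicity-free decompositions already established, using the commutativity criterion of Theorem~\ref{thm:H-commutativeC*}. For the commutativity of $\cT^{(\lambda)}(\cA^{\U(2)\times\T^2})$ I would apply Theorem~\ref{thm:H-commutativeC*} with $H = \U(2)\times\T^2$: it suffices to know that $\pi_\lambda|_H$ is multiplicity-free, and this is exactly the content of Proposition~\ref{prop:U2T2-isotypic}, where the restriction of $\pi_\lambda$ to the covering $\R\times\SU(2)\times\R\times\T$ of $\U(2)\times\T^2$ was shown to decompose as the multiplicity-free sum of the irreducibles $(F_\nu^*\otimes\C_{(\nu_2,\nu_1-\nu_2-2j)})_\lambda$. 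This yields the first assertion at once.

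For the classification of connected subgroups $H$ with $\T^4 \subseteq H \subseteq \U(2)\times\U(2)$ that give a commutative algebra, I would first invoke Proposition~\ref{prop:subgroupsT4U2U2} to reduce to finitely many candidates: the only such connected subgroups are $\T^4$, $\U(2)\times\T^2$, $\T^2\times\U(2)$, and $\U(2)\times\U(2)$. Three of these produce commutative algebras: $\U(2)\times\T^2$ by the first part; $\T^2\times\U(2)$ by the unitary equivalence of Proposition~\ref{prop:U2T2vsT2U2}; and $\U(2)\times\U(2)$ because Proposition~\ref{prop:U2U2-isotypic} shows $\pi_\lambda|_{\R\times\SU(2)\times\R\times\SU(2)}$ is multiplicity-free, so Theorem~\ref{thm:H-commutativeC*} applies. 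Since the theorem only asserts the forward implication (commutativity forces $H$ to be one of the three), it is in fact enough to rule out the remaining candidate $\T^4$.

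Excluding $H = \T^4$ is the only step requiring a genuine argument, and I expect it to be the main (though still routine) obstacle. Here I would use that the center $\{tI_4 : t\in\T\}$ of $\U(2,2)$ acts trivially on $D$ and that $\T^4 = \T^3\cdot\{tI_4 : t\in\T\}$, so that the $\T^4$-orbits on $D$ coincide with the $\T^3$-orbits; consequently $\cA^{\T^4} = \cA^{\T^3}$ and hence $\cT^{(\lambda)}(\cA^{\T^4}) = \cT^{(\lambda)}(\cA^{\T^3})$. The latter was already observed to be non-commutative for every $\lambda > 3$, since $\pi_\lambda|_{\T^3}$ fails to be multiplicity-free by Example~6.5 of \cite{DOQ}. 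Therefore a connected $H$ with commutative $\cT^{(\lambda)}(\cA^H)$ cannot equal $\T^4$, leaving exactly the three groups claimed. The final statement on the unitary equivalence of $\cT^{(\lambda)}(\cA^{\U(2)\times\T^2})$ and $\cT^{(\lambda)}(\cA^{\T^2\times\U(2)})$ is precisely Proposition~\ref{prop:U2T2vsT2U2}, so nothing further is needed.
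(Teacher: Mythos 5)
Your proposal is correct and follows essentially the same route as the paper: commutativity of $\cT^{(\lambda)}(\cA^{\U(2)\times\T^2})$ via Theorem~\ref{thm:H-commutativeC*} together with the multiplicity-free decomposition of Proposition~\ref{prop:U2T2-isotypic}, the classification via Proposition~\ref{prop:subgroupsT4U2U2} combined with the non-commutativity of $\cT^{(\lambda)}(\cA^{\T^3})$, and the unitary equivalence via Proposition~\ref{prop:U2T2vsT2U2}. Your explicit verification that $\cA^{\T^4}=\cA^{\T^3}$ (since $\T^4=\T^3\cdot\{tI_4: t\in\T\}$ and the center acts trivially on $D$) usefully fills in a step the paper leaves to ``the remarks'' of earlier sections.
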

\begin{proof}
    The commutativity of $\cT^{(\lambda)}(\cA^{\U(2)\times\T^2})$ follows from Proposition~\ref{prop:U2T2-isotypic} and Theorem~\ref{thm:H-commutativeC*}. The possibilities on the choices of $H$ follows from Proposition~\ref{prop:subgroupsT4U2U2} and the remarks from Section~\ref{sec:preliminaries}. The last claim is the content of Proposition~\ref{prop:U2T2vsT2U2}.
\end{proof}

We also obtain the following orthogonality relations for the polynomials $p_{\nu,j}$.

\begin{proposition}\label{prop:pnuj_Schur_relations}
    Let $\nu \in \rN^2$ be fixed. Then, we have
    \[
        \int_{\U(2)} p_{\nu,j}(A) \overline{p_{\nu,k}(A)} \dif A
            = \frac{\delta_{jk}}{\nu_1-\nu_2+1} \binom{\nu_1-\nu_2}{j}
    \]
    for every $j,k = 0, \dots, \nu_1-\nu_2$.
\end{proposition}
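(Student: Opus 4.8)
The plan is to deduce the relation from the Schur orthogonality relations for the irreducible $\U(2)$-module $F_\nu$, extracting the binomial coefficient from the normalization of the relevant weight vector. Throughout I write $n = \nu_1-\nu_2$ and $m = \nu_2$. The first step is to observe that on $\U(2)$ one has $|\det(A)| = 1$, so that $\det(A)^m\,\overline{\det(A)^m} = |\det(A)|^{2m} = 1$ and the determinantal factor cancels from the integrand. Hence the integral involves only the top row of $A$ and reduces to
\[
\int_{\U(2)} a_{11}^{\,n-j}a_{12}^{\,j}\,\overline{a_{11}^{\,n-k}a_{12}^{\,k}}\,\dif A .
\]
The vanishing for $j \neq k$ is then immediate from the right-invariance of Haar measure under the diagonal torus: replacing $A$ by $A\,\diag(s_1,s_2)$ multiplies the integrand by $(s_1 s_2^{-1})^{k-j}$, and averaging over $s_1,s_2 \in \T$ forces the integral to vanish unless $j=k$. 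This produces the factor $\delta_{jk}$.

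For the diagonal terms I would realize these functions through the action of $\U(2)$ on $F_\nu \cong \mathrm{Sym}^n(\C^2)\otimes(\det)^m$, the $(n+1)$-dimensional irreducible module appearing in Proposition~\ref{prop:U2T2-isotypic}. Realizing $\mathrm{Sym}^n(\C^2)$ as the homogeneous degree-$n$ polynomials in $(x_1,x_2)$ with highest weight vector $v_0 = x_1^n$, the orbit of $v_0$ is governed by the binomial theorem,
\[
\rho(A)v_0 = (a_{11}x_1 + a_{12}x_2)^n = \sum_{j=0}^{n}\binom{n}{j}\,a_{11}^{\,n-j}a_{12}^{\,j}\,x_1^{\,n-j}x_2^{\,j},
\]
so that each $p_{\nu,j}(A)$ is, up to the factor $\binom{n}{j}$ and the unimodular factor $\det(A)^m$, the coordinate of the single orbit vector $\rho(A)v_0$ along the weight vector $x_1^{\,n-j}x_2^{\,j}$. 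In this way the functions become genuine matrix coefficients of $F_\nu$, and I would apply the Schur orthogonality relations in the form $\int_{\U(2)}\langle \rho(A)v_0, w\rangle\,\overline{\langle \rho(A)v_0, w'\rangle}\,\dif A = \frac{1}{n+1}\|v_0\|^2\,\overline{\langle w, w'\rangle}$. The factor $1/(n+1)$ coming from $\dim F_\nu = n+1$ is exactly the prefactor in the statement, and the orthogonality of the distinct weight vectors $x_1^{\,n-j}x_2^{\,j}$ recovers $\delta_{jk}$ a second time.

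The step that fixes the precise form of the answer, and which I expect to be the main obstacle, is the bookkeeping of the normalization constant: one must track how the factor $\binom{n}{j}$ from the expansion above combines with the squared norm of the weight vector $x_1^{\,n-j}x_2^{\,j}$ (relative to that of $v_0$) in the chosen invariant inner product on $F_\nu$. I would carry this out using the Hermitian form on $\mathrm{Sym}^n(\C^2)$ induced from the standard one on $\C^2$, under which the monomials $x_1^{\,n-j}x_2^{\,j}$ are mutually orthogonal, and verify that the resulting constant is precisely $\binom{n}{j}$, yielding
\[
\int_{\U(2)} p_{\nu,j}(A)\,\overline{p_{\nu,k}(A)}\,\dif A = \frac{\delta_{jk}}{\nu_1-\nu_2+1}\binom{\nu_1-\nu_2}{j}.
\]
The delicate point is that the binomial coefficient is extremely sensitive to the choice of normalization of the weight vectors, so the heart of the argument is to pin down this inner product unambiguously and confirm that the binomial lands in the numerator rather than the denominator.
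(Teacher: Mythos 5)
Your framework is sound, and it is in essence the same route the paper takes (realize the $p_{\nu,j}$ as matrix coefficients of $F_\nu\cong\mathrm{Sym}^{n}(\C^2)\otimes{\det}^{\nu_2}$, $n=\nu_1-\nu_2$, and apply Schur orthogonality); your torus-invariance argument for the off-diagonal vanishing is correct and even a bit more elementary. But the one step you deferred --- pinning down the normalization --- is exactly where the argument breaks, and it breaks in the direction opposite to what you assert. In any $\U(2)$-invariant inner product on $\mathrm{Sym}^n(\C^2)$ the monomials $e_j=x_1^{n-j}x_2^{j}$ are orthogonal with norms forced, up to overall scale, by unitarity of the $\su(2)$ raising and lowering operators: $\norm{e_j}^2=\binom{n}{j}^{-1}\norm{e_0}^2$ (this is visible in the paper's own orthonormal basis $v_j=\binom{n}{j}^{1/2}e_j$). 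Normalizing $\norm{e_0}=1$, your expansion $\rho(A)e_0=\sum_j\binom{n}{j}a_{11}^{n-j}a_{12}^{j}e_j$ gives $\left<\rho(A)e_0,e_j\right>=\binom{n}{j}a_{11}^{n-j}a_{12}^{j}\norm{e_j}^2=a_{11}^{n-j}a_{12}^{j}$: the binomial coefficient cancels completely against the norm of the weight vector. Schur orthogonality (legitimate here even though $\rho(A)p=p\circ A$ is an anti-homomorphism, since $A\mapsto A^{-1}$ preserves Haar measure) then yields
\[
    \int_{\U(2)}\abs{p_{\nu,j}(A)}^2\dif A
        =\frac{\norm{e_0}^2\norm{e_j}^2}{n+1}
        =\frac{1}{(n+1)\binom{n}{j}},
\]
with the binomial in the \emph{denominator}, not the numerator.

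This is not a repairable bookkeeping choice on your part: the stated formula is false. The first row of a Haar-distributed $A\in\U(2)$ is uniform on the unit sphere of $\C^2$, so $u=\abs{a_{11}}^2$ is uniform on $[0,1]$ and
\[
    \int_{\U(2)}\abs{a_{11}}^{2(n-j)}\abs{a_{12}}^{2j}\dif A
        =\int_0^1 u^{n-j}(1-u)^{j}\dif u
        =\frac{(n-j)!\,j!}{(n+1)!}
        =\frac{1}{(n+1)\binom{n}{j}}.
\]
For $n=2$, $j=k=1$ this equals $1/6$, whereas the proposition asserts $\binom{2}{1}/3=2/3$ --- impossible outright, since $\abs{a_{11}a_{12}}^2\leq 1/4$ pointwise and $\dif A$ is a probability measure. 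The source of the discrepancy is a slip in the paper's own proof: from its displayed evaluations $p_{\nu,j}(A_0)=(-1)^j2^{-n/2}$ and $\left<\pi_\nu(A_0)v_j,v_0\right>=(-1)^j2^{-n/2}\binom{n}{j}^{1/2}$, the defining relation $p_{\nu,j}=\alpha_{\nu,j}\left<\pi_\nu(\cdot)v_j,v_0\right>$ forces $\alpha_{\nu,j}=\binom{n}{j}^{-1/2}$, and the paper inverts this ratio. So your instinct that everything hinges on whether the binomial ``lands in the numerator rather than the denominator'' was exactly right, but the verification you promised comes out the other way: your proposal as written cannot be completed, and the correct statement is
\[
    \int_{\U(2)}p_{\nu,j}(A)\overline{p_{\nu,k}(A)}\dif A
        =\frac{\delta_{jk}}{(\nu_1-\nu_2+1)\binom{\nu_1-\nu_2}{j}},
\]
which also changes the weights $\binom{j}{k}^2\binom{\nu_1-\nu_2}{k}$ in Theorem~\ref{thm:coefficients} to $\binom{j}{k}^2\binom{\nu_1-\nu_2}{k}^{-1}$.
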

\begin{proof}
    We remember that the irreducible $\U(2)$-module $F_\nu$ can be realized as the space of homogeneous polynomials of degree $\nu_1-\nu_2$ in two complex variables. For this realization, the $\U(2)$-action is given by
    \[
        (\pi_\nu(A)p)(z) = \det(A)^{\nu_1}p(A^{-1} z)
    \]
    for $A \in U(2)$ and $z \in \C^2$.

    Also, the computation of orthonormal bases on Bergman spaces on the unit ball (see for example \cite{Zhu2005}) implies that there is a $\U(2)$-invariant inner product $\left<\cdot,\cdot\right>$ on $F_\nu$ for which the basis
    \[
        \left\{ v_j(z_1,z_2) = \binom{\nu_1-\nu_2}{j}^{\frac{1}{2}} z_1^{\nu_1-\nu_2-j}z_2^{j}  :  j=0,1,\ldots, \nu_1-\nu_2\right\},
    \]
 is orthonormal. We fix the inner product and this orthonormal basis for the rest of the proof.

    With these choices it is easy to see that the map given by
    \[
        Z \mapsto \left<\pi_\nu(Z)v_j, v_0\right>,
    \]
    for $Z \in \GL(2,\C)$, is polynomial and is a highest weight vector for the $\U(2)\times\T^2$-module corresponding to $F_\nu^*\otimes \C_{(\nu_2,\nu_1-\nu_2-2j)}$ in the isomorphism given by Proposition~\ref{prop:U2T2-isotypic}. Hence there is a complex number $\alpha_{\nu,j}$ such that
    \[
        p_{\nu,j}(Z) =
            \alpha_{\nu,j} \left<\pi_\nu(Z)v_j, v_\nu\right>
    \]
    for all $Z \in \GL(2,\C)$ and $j = 0, \dots, \nu_1-\nu_2$.

    By Schur's orthogonality relations we conclude that
    \[
        \int_{\U(2)} p_{\nu,j}(Z) \overline{p_{\nu,k}(Z)} \dif Z
            = \frac{\delta_{jk}|\alpha_{\nu,j}|^2}{\nu_1-\nu_2+1}
    \]
    for every $j,k = 0, \dots, \nu_1-\nu_2$.

    Next we choose
    \[
        A_0 =
            \begin{pmatrix}
                \frac{1}{\sqrt{2}}  & -\frac{1}{\sqrt{2}} \\
                \frac{1}{\sqrt{2}}  & \frac{1}{\sqrt{2}}
            \end{pmatrix}
              \in \SU(2).
    \]
    and evaluate at this matrix to compute the constant $\alpha_{\nu,j}$.

    First, we compute
    \begin{align*}
        (\pi_\nu (A_0^{-1}) v_0)(z_1,z_2)
            &= v_0 \left(
                \begin{pmatrix}
                    \frac{1}{\sqrt{2}}  & -\frac{1}{\sqrt{2}} \\
                    \frac{1}{\sqrt{2}}  & \frac{1}{\sqrt{2}}
                \end{pmatrix}
                \begin{pmatrix}
                    z_1\\
                    z_2
                \end{pmatrix}
                \right) \\
            &= v_0\left(\frac{1}{\sqrt{2}}(z_1-z_2),\frac{1}{\sqrt{2}}(z_1+z_2)\right) \\
            &= \frac{1}{\sqrt{2^{\nu_1-\nu_2}}} (z_1-z_2)^{\nu_1-\nu_2} \\
            &= \frac{1}{\sqrt{2^{\nu_1-\nu_2}}}
                \sum_{j=0}^{\nu_1-\nu_2} (-1)^j
                \binom{\nu_1-\nu_2}{j}z_1^{\nu_1-\nu_2-j}z_2^j,
    \end{align*}
    which implies that
    \[
        \left<\pi_\nu(A_0)v_j, v_0\right> =
        \left<v_j, \pi_\nu(A_0^{-1})v_0\right> =
        \frac{(-1)^j}{\sqrt{2^{\nu_1-\nu_2}}} \binom{\nu_1-\nu_2}{j}^{\frac{1}{2}}.
    \]
    Meanwhile,
    \[
        p_{\nu,j}(A_0)
        = \left(\frac{1}{\sqrt{2}}\right)^{\nu_1-\nu_2-j} \left(-\frac{1}{\sqrt{2}}\right)^{j} \det(A_0)^{\nu_2}
        = \frac{(-1)^j}{\sqrt{2^{\nu_1-\nu_2}}},
    \]
    thus implying that
    \[
        \alpha_{\nu,j} = \binom{\nu_1-\nu_2}{j}^{\frac{1}{2}}.
    \]
    This completes our proof.
\end{proof}

\section{The spectra of Toeplitz operators with $\U(2)\times\T^2$-invariant symbols}\label{sec:spectra}
\noindent
We recall that the Haar measure $\mu$ on $\GL(2,\C)$ is given by
\[
   \dif\mu(Z) =| \det(Z)|^{-4} \dif Z=\det (ZZ^*)^{-2} \dif Z.
\]
where $\dif Z$ denotes the Lebesgue measure on the Euclidean space
$M_{2\times2}(\C)$. Furthermore, we have the following expression for the Haar measure:

\begin{lemma}\label{lem:Haar_foliated}
    For every function $f \in C_c(\GL(2,\C))$ we have
    \[
        \int_{\GL(2,\C)} f(Z) \dif\mu(Z)
        = \int_\C \int_{(0,\infty)^2} \int_{\U(2)}
        f\left( A
            \left( \begin{matrix}
                 a_1 & z\\
                 0   & a_2
                \end{matrix}\right)
            \right)
           a_2^{-2} \dif A \dif a \dif z.
    \]
\end{lemma}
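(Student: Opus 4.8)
The plan is to establish the foliated integration formula for the Haar measure on $\GL(2,\C)$ by reducing to the known expression $\dif\mu(Z) = |\det(Z)|^{-4}\dif Z$ and carefully changing variables according to a parametrization of $\GL(2,\C)$ that is adapted to the decomposition suggested by Proposition~\ref{prop:U2T-orbits}. Concretely, I would introduce the map
\[
    \Phi : \U(2) \times (0,\infty)^2 \times \C \rightarrow \GL(2,\C),
    \quad
    (A,a_1,a_2,z) \mapsto A\rmatrix\Big|_{(r_1,r_2,r_3)=(a_1,z,a_2)},
\]
that is, $\Phi(A,a_1,a_2,z) = A\left(\begin{smallmatrix} a_1 & z \\ 0 & a_2\end{smallmatrix}\right)$, and show that it is (up to a measure-zero set and a finite fiber) a parametrization of $\GL(2,\C)$. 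The guiding principle is that the right-hand side integrand already carries the correct Haar density once one verifies that $\dif A\,\dif a_1\,\dif a_2\,\dif z$, weighted by the factor $a_2^{-2}$, pulls back to a bi-invariant measure on $\GL(2,\C)$; by uniqueness of Haar measure up to a constant this forces agreement with $\dif\mu$, and matching normalizations fixes the constant as $1$.

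\textbf{Key steps.} First I would invoke the surjectivity and essential injectivity of $\Phi$: the Gram--Schmidt/QR-type argument in Proposition~\ref{prop:U2T-orbits} shows every $Z \in \GL(2,\C)$ can be written as $A R$ with $A \in \U(2)$ and $R$ upper triangular with positive diagonal, and the uniqueness clause there (for $Z$ with $\det Z, \langle Z_1,Z_2\rangle \neq 0$) guarantees this is a bijection off a null set, so it suffices to integrate over the image. Second, I would compute the Jacobian of $\Phi$ with respect to Lebesgue measure. Writing $Z = AR$ with $R = \left(\begin{smallmatrix} a_1 & z \\ 0 & a_2\end{smallmatrix}\right)$, left multiplication by the unitary $A$ is Lebesgue-measure preserving on $M_{2\times2}(\C)$, so the Lebesgue volume form $\dif Z$ pulls back to $(\dif\,\text{Haar on }\U(2)) \times (\text{Lebesgue density of the } R\text{-coordinates as a submanifold})$ together with the transverse contribution from varying $A$. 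The transverse piece is the standard computation: the differential of $A \mapsto AR$ at fixed $R$, composed with the orthogonal projection off the triangular directions, contributes a factor depending only on the diagonal entries $a_1, a_2$ of $R$. Third, I would combine this Jacobian with the weight $|\det(Z)|^{-4} = (a_1 a_2)^{-4}$ coming from $\dif\mu$, and check that the surviving $a_1, a_2$ powers collapse to exactly the $a_2^{-2}$ appearing in the statement (the $a_1$-dependence and part of the $a_2$-dependence being absorbed by the Jacobian factor).

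\textbf{Main obstacle.} The delicate step is the explicit Jacobian computation for the $\U(2)$-by-triangular factorization, i.e.\ correctly accounting for how the tangent directions of $\U(2)$ interact with the triangular block when one differentiates $A \mapsto AR$. The off-diagonal unitary directions (the space $V$ from Proposition~\ref{prop:subgroupsT4U2U2}) produce, upon multiplication by $R$ and projection, factors that scale with $a_1$ and $a_2$; the diagonal torus directions interact differently. Keeping track of which powers of $a_1$ and $a_2$ arise from the real-dimension bookkeeping (four real dimensions of $\U(2)$, two of the $a_i$, two of $z$, totaling eight, matching $\dim_\R \GL(2,\C)$) and confirming that after weighting by $(a_1 a_2)^{-4}$ the net density is precisely $a_2^{-2}\,\dif A\,\dif a\,\dif z$ is where the computation must be done carefully. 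A clean alternative I would keep in reserve is to bypass the raw Jacobian by exploiting \emph{invariance}: verify directly that the right-hand side defines a left- and right-$\GL(2,\C)$-invariant functional on $C_c(\GL(2,\C))$ — left-invariance under $\U(2)$ is immediate from invariance of $\dif A$, and the remaining invariances (under the upper-triangular Borel subgroup and then all of $\GL(2,\C)$ by density) reduce to one-dimensional scaling checks on the $a_i$ and $z$ variables — and then appeal to uniqueness of Haar measure, fixing the constant by testing on a single convenient function.
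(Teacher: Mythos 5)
Your route is genuinely different from the paper's: the paper simply quotes the Iwasawa ($KAN$) integration formula from Knapp for $\GL(2,\C)=\U(2)AN$ and then absorbs the unipotent factor into the triangular matrix via the substitution $z\mapsto a_1z$, whereas you propose either a direct Jacobian computation for the $QR$ parametrization $\Phi(A,a_1,a_2,z)=A\left(\begin{smallmatrix}a_1&z\\0&a_2\end{smallmatrix}\right)$ or an invariance-plus-uniqueness argument. Both of your routes are legitimate and more self-contained. Two small points: $\Phi$ is in fact a genuine bijection onto $\GL(2,\C)$ by uniqueness of the $QR$ decomposition with positive diagonal, so no null set or finite fiber needs to be excised (Proposition~\ref{prop:U2T-orbits} concerns the two-sided $\U(2)\times\T$ action and is not quite the statement you need); and checking bi-invariance under all of $\GL(2,\C)$ directly from the right-hand side is not feasible (a general translation scrambles the coordinates), so the workable version of your alternative is left-invariance under $\U(2)$ plus right-invariance under the group $T$ of upper-triangular matrices with positive diagonal, together with a justification that a measure on $\GL(2,\C)=\U(2)\cdot T$ with these two invariances is unique up to scale.

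The genuine gap is that the decisive computation, which you correctly isolate as the main obstacle, is never performed, and when one performs it the powers of $a_1,a_2$ do \emph{not} collapse to the stated $a_2^{-2}$ if $\dif a$ is read as Lebesgue measure $\dif a_1\dif a_2$. Gram--Schmidt on the columns $Z=(Z_1,Z_2)$ gives $Z_1=a_1A_1$ and $Z_2=zA_1+a_2A_2$ with $(A_1,A_2)$ a unitary frame; polar coordinates in $\C^2$ give $\dif Z_1=a_1^3\,\dif a_1\,\dif\sigma(A_1)$ and, for fixed $A_1$, $\dif Z_2=a_2\,\dif z\,\dif a_2\,\dif\theta(A_2)$, so that, up to a constant depending only on the normalization of $\dif A$,
\[
\dif Z = C\, a_1^3a_2\,\dif A\,\dif a_1\,\dif a_2\,\dif z,\qquad
\dif\mu(Z)=|\det Z|^{-4}\dif Z = C\,a_1^{-1}a_2^{-3}\,\dif A\,\dif a\,\dif z .
\]
The same exponents come out of your invariance route (right Haar measure on $T$ is $a_1^{-1}a_2^{-3}\,\dif a\,\dif z$), and there are two independent confirmations: the density must be homogeneous of degree $-4$ in $(a_1,a_2,z)$ because $\dif\mu$ is invariant under $Z\mapsto\lambda Z$, which $a_2^{-2}$ is not; and inserting $a_1^{-1}a_2^{-3}$ into the normalization integral $\int_D\det(I_2-ZZ^*)^{\lambda-4}\dif Z$ reproduces exactly Hua's constant $c_\lambda^{-1}=\pi^4/[(\lambda-3)(\lambda-2)^2(\lambda-1)]$. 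So either $\dif a$ in the statement must be interpreted as the multiplicative Haar measure $\frac{\dif a_1\dif a_2}{a_1a_2}$ on $(0,\infty)^2$, under which $a_2^{-2}\dif a=a_1^{-1}a_2^{-3}\dif a_1\dif a_2$ and the formula is correct, or the density must be corrected. Your write-up asserts that the computation will confirm $a_2^{-2}$ without carrying it out; done honestly, it will not, and this normalization must be resolved before the formula is used downstream.
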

\begin{proof}
For the moment let
\[ n_z =
\begin{pmatrix} 1 & z\\ 0 & 1\end{pmatrix}. \]
 We start with the Iwasawa decomposition of $GL(2,\C)$ that allows us to decompose any $Z \in \GL(2,\C)$ as
    \[
        Z = A \diag (a_1,b_1) n_z     \]
where $A \in \U(2)$, $a_1, a_2 > 0$ and $z \in \C$. Then,
\cite[Prop. 8.43]{Knapp2002} and some changes of coordinates we obtain the result as follows.
    \begin{align*}
        \int_{\GL(2,\C)} f(Z) &\dif\mu(Z) \\
            &=  \int_\C \int_0^\infty \int_0^\infty \int_{\U(2)}
                f\left(A
                   \begin{pmatrix}
                       a_1 & 0 \\
                        0   & a_2
                    \end{pmatrix} n_z
                \right)
                    a_1^2 a_2^{-2} \dif A \dif a_1 \dif a_2 \dif z \\
            &= \int_\C \int_0^\infty \int_0^\infty \int_{\U(2)}
                f\left(A
                    \begin{pmatrix}
                        a_1 & a_1 z\\
                        0   & a_2
                    \end{pmatrix}
                \right)
                    a_1^2 a_2^{-2} \dif A \dif a_1 \dif a_2 \dif z  \\
            &= \int_\C \int_0^\infty \int_0^\infty \int_{\U(2)}
                f\left(A
                    \begin{pmatrix}
                        a_1 & z\\
                        0   & a_2
                    \end{pmatrix}
                \right)
                a_2^{-2} \dif A \dif a_1 \dif a_2 \dif z.\qedhere
    \end{align*}
\end{proof}

By the remarks above, the weighted measure $v_\lambda$ on $D$ can be written in terms of the Haar measure on $\GL(2,\C)$ as follows

\begin{align}
   \dif v_{\lambda}(Z) &= c_\lambda | \det(Z)|^4\det(I_2-ZZ^*)^{\lambda-4} \dif\mu(Z)\label{eq:vlambda_Haar}\\
   &=c_\lambda \det(ZZ^*)^2\det(I_2-ZZ^*)^{\lambda-4} \dif\mu(Z) .\nonumber
\end{align}
We use this and Lemma~\ref{lem:Haar_foliated} to write down the measure $v_\lambda$ in terms of measures associated to the foliation on $M_{2\times2}(\C)$ given by the action of $\U(2)\times \T^2$ (see Proposition \ref{prop:U2T-orbits}). The next result applies only to suitably invariant functions, but this is enough for our purposes.

\begin{proposition}\label{prop:vlambda_U2T2}
    Let $\lambda > 3$ be fixed. If $f \in C_c(M_{2\times2}(\C))$ is a function that satisfies $f(t_\theta Z t_\theta^{-1}) = f(Z)$ for every $Z \in M_{2\times2}(\C)$ where
    \[
        t_\theta =
            \begin{pmatrix}
              e^{2\pi i \theta} & 0 \\
              0 & e^{-2\pi i \theta}
            \end{pmatrix}, \quad \theta\in\R ,
    \]
   then we have
    \[
        \int_{M_{2\times2}(\C)} f(Z) \dif v_\lambda(Z) =
          2\pi c_\lambda \int_{R_+^3} \int_{\U(2)}
            f\left(A
                \begin{pmatrix}
                    r_1 & r_2\\
                    0   & r_3
                \end{pmatrix}
            \right) r_1^4 r_2 r_3^2
            b(r)^{\lambda-4}
                \dif A \dif r,
    \]
    where $b(r) = 1-r_1^2-r_2^2-r_3^2+r_1^2 r_3^2$ for $r \in (0,\infty)^3$.
\end{proposition}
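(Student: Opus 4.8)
The plan is to substitute the expression \eqref{eq:vlambda_Haar} for $\dif v_\lambda$ into the integral and then apply the foliated Haar-measure formula of Lemma~\ref{lem:Haar_foliated}. Thus I would first write
\[
    \int_{M_{2\times2}(\C)} f(Z)\,\dif v_\lambda(Z)
    = c_\lambda \int_{\GL(2,\C)} f(Z)\,\det(ZZ^*)^2\det(I_2-ZZ^*)^{\lambda-4}\,\dif\mu(Z),
\]
and apply Lemma~\ref{lem:Haar_foliated} to the integrand $f(Z)\det(ZZ^*)^2\det(I_2-ZZ^*)^{\lambda-4}$. Setting $W=\left(\begin{smallmatrix} a_1 & z \\ 0 & a_2\end{smallmatrix}\right)$, each point becomes $Z=AW$ with $A\in\U(2)$, $a_1,a_2>0$, $z\in\C$, and the Haar factor contributes $a_2^{-2}$.

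Next I would simplify the two determinant weights. Since $AA^*=I_2$ we have $\det(AW(AW)^*)=\det(WW^*)$ and $I_2-AWW^*A^*=A(I_2-WW^*)A^*$, so $\det(I_2-AWW^*A^*)=\det(I_2-WW^*)$; both weights therefore depend on $W$ alone and the unitary factor drops out. A direct computation for the upper-triangular $W$ gives $\det(WW^*)=a_1^2a_2^2$ and
\[
    \det(I_2-WW^*)=1-a_1^2-a_2^2-|z|^2+a_1^2a_2^2,
\]
which is precisely $b(r)$ after setting $r_1=a_1$, $r_3=a_2$, $r_2=|z|$. Combining, $\det(WW^*)^2\cdot a_2^{-2}=a_1^4a_2^2=r_1^4r_3^2$, which produces the factors $r_1^4r_3^2\,b(r)^{\lambda-4}$ of the statement.

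The crucial step, where the invariance hypothesis on $f$ enters, is to eliminate the phase of $z$. Here I would use that conjugation by $t_\theta$ is an inner automorphism of $\U(2)$, hence preserves its Haar measure, together with the identity
\[
    t_\theta W t_\theta^{-1}=\begin{pmatrix} a_1 & e^{4\pi i\theta}z \\ 0 & a_2\end{pmatrix}.
\]
Writing $t_\theta(AW)t_\theta^{-1}=(t_\theta A t_\theta^{-1})(t_\theta W t_\theta^{-1})$ and invoking $f(t_\theta(AW)t_\theta^{-1})=f(AW)$, the substitution $A\mapsto t_\theta A t_\theta^{-1}$ shows that $\int_{\U(2)}f(AW)\,\dif A$ is unchanged when $z$ is replaced by $e^{4\pi i\theta}z$. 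As $\theta$ runs over $\R$ the multiplier $e^{4\pi i\theta}$ covers all of $\T$, so this inner integral depends on $z$ only through $r_2=|z|$. Since $b(r)$ is likewise a function of $|z|$, passing to polar coordinates $z=r_2e^{i\phi}$, $\dif z=r_2\,\dif r_2\,\dif\phi$, leaves an integrand independent of $\phi$; the $\phi$-integration yields the factor $2\pi$ together with the extra $r_2$, and renaming $(a_1,a_2)=(r_1,r_3)$ and writing $\dif r=\dif r_1\,\dif r_2\,\dif r_3$ gives the claimed formula.

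I expect the phase-removal argument to be the only genuinely delicate point: the bare $\U(2)$-integral $\int_{\U(2)}f(AW)\,\dif A$ depends on $W$ only through $W^*W$, which still detects the phase of $z$, so the conjugation-invariance of $f$ is indispensable and must be used exactly as above. Everything else reduces to the determinant identities and the change of variables just described. One should also note that $v_\lambda$ is supported on $D$, so effectively the integrand is supported where $b(r)>0$; as $f\in C_c(M_{2\times2}(\C))$ this causes no difficulty.
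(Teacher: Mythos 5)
Your proposal is correct and follows essentially the same route as the paper: substitute \eqref{eq:vlambda_Haar}, apply Lemma~\ref{lem:Haar_foliated}, compute $\det(I_2-WW^*)=b(a_1,|z|,a_2)$ for the upper-triangular factor, and remove the phase of $z$ via the conjugation $t_\theta W t_\theta^{-1}$ combined with the hypothesis on $f$ and the invariance of the Haar measure of $\U(2)$ under inner automorphisms. The only difference is cosmetic (you establish that the inner $\U(2)$-integral depends only on $|z|$ before passing to polar coordinates, whereas the paper does the polar substitution first), and your closing observation that the invariance of $f$ is genuinely needed because $W^*W$ detects the phase of $z$ is accurate.
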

\begin{proof}
    First we observe that for every $A \in U(n), a_1, a_2 > 0$ and $z \in \C$ we have
    \begin{align*}
        \det\left(I_2 - A
            \begin{pmatrix}
              a_1 & z \\
              0  & a_2
            \end{pmatrix}
            \begin{pmatrix}
              a_1 &  0 \\
              \overline{z}  & a_2
            \end{pmatrix}
            A^*\right)
           &=
           \det\left(I_2 -
            \begin{pmatrix}
                a_1^2 + |z|^2 & a_2 z \\
                a_2 \overline{z} & a_2^2
            \end{pmatrix}
           \right) \\
           &=
           \det\left(
            \begin{pmatrix}
                 1-a_1^2 - |z|^2 & -a_2 z \\
                 -a_2 \overline{z} & 1-a_2^2
            \end{pmatrix}\right)  \\
           &=
           1-a_1^2-a_2^2-|z|^2+a_1^2 a_2^2 \\
           &=
           b(a_1,|z|,a_2),
    \end{align*}
    where $b$ is defined as in the statement. Using this last identity, \eqref{eq:vlambda_Haar} and Lemma~\ref{lem:Haar_foliated} we compute the following for $f$ as in the statement. We apply some coordinates changes and use the bi-invariance of the Haar measure of $\U(n)$.
    \begin{align*}
        \int_{M_{2\times2}(\C)} &f(Z) \dif v_\lambda(Z) \\
            =&\,
               c_\lambda \int_\C \int_{(0,\infty)^2} \int_{\U(2)}
                f\left(A
                    \begin{pmatrix}
                        a_1 & z\\
                        0   & a_2
                    \end{pmatrix}
                \right) \\
                &\times a_1^4 a_2^{2}
                    b(a_1,|z|,a_2)^{\lambda-4}
                    \dif A \dif a \dif z \\
            =&\,
               2\pi c_\lambda \int_0^1 \int_{(0,\infty)^3} \int_{\U(2)}
                f\left(A
                    \begin{pmatrix}
                        a_1 & re^{2\pi i \theta}\\
                        0   & a_2
                    \end{pmatrix}
                \right) \\
                &\times a_1^4 a_2^{2} r
                    b(a_1,r,a_2)^{\lambda-4}
                    \dif A \dif a \dif r \dif \theta \\
            =&\,
               2\pi c_\lambda \int_0^1 \int_{(0,\infty)^3} \int_{\U(2)}
                f\left(A t_{\theta/2}
                    \begin{pmatrix}
                        a_1 & r\\
                        0   & a_2
                    \end{pmatrix} t_{\theta/2}^{-1}
                \right) \\
                &\times a_1^4 a_2^{2} r
                    b(a_1,r,a_2)^{\lambda-4}
                    \dif A \dif a \dif r \dif \theta \\
            =&\,
               2\pi c_\lambda \int_0^1 \int_{(0,\infty)^3} \int_{\U(2)}
                f\left(t_{\theta/2}^{-1}A t_{\theta/2}
                    \begin{pmatrix}
                        a_1 & r\\
                        0   & a_2
                    \end{pmatrix}
                \right) \\
                &\times a_1^4 a_2^{2} r
                    b(a_1,r,a_2)^{\lambda-4}
                    \dif A \dif a \dif r \dif \theta \\
            =&\,
               2\pi c_\lambda \int_0^1 \int_{(0,\infty)^3} \int_{\U(2)}
                f\left(A
                    \begin{pmatrix}
                        a_1 & r\\
                        0   & a_2
                    \end{pmatrix}
                \right) \\
                &\times a_1^4 a_2^{2} r
                    b(a_1,|z|,a_2)^{\lambda-4}
                    \dif A \dif a \dif r \dif \theta.
    \end{align*}
\end{proof}

In view of Proposition~\ref{prop:vlambda_U2T2} the following formula will be useful.

\begin{lemma}\label{lem:pnuj_asum}
    For every $\nu \in \rN^2$ and $j = 0, \dots, \nu_1-\nu_2$ we have
    \[
        p_{\nu,j}\left(A
            \begin{pmatrix}
              r_1 & r_2 \\
              0 & r_3
            \end{pmatrix} \right)
            = \sum_{k=0}^j \binom{j}{k}
                p_{\nu,k}(A) r_1^{\nu_1-j}r_2^{j-k} r_3^{\nu_2 + k}
    \]
    for every $A \in \U(2)$ and $r \in (0,\infty)^3$.
\end{lemma}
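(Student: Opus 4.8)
The plan is a direct entrywise computation resting on the explicit formula $p_{\nu,j}(Z) = z_{11}^{\nu_1-\nu_2-j}z_{12}^j\det(Z)^{\nu_2}$ from Proposition~\ref{prop:U2T2-isotypic}, together with the fact recorded there that the $\U(2)$-module structure of $F_\nu$ extends canonically to $\GL(2,\C)$. This means $p_{\nu,j}$ is a genuine polynomial on all of $M_{2\times2}(\C)$, so the asserted identity is an identity of polynomials that can be verified by expanding both sides. The crucial observation is that $p_{\nu,j}$ depends on its argument only through the two first-row entries and the determinant, so multiplying $A$ by the upper-triangular matrix $R = \begin{pmatrix} r_1 & r_2 \\ 0 & r_3 \end{pmatrix}$ affects $p_{\nu,j}$ in a very controlled way.

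First I would write $A = (a_{ij})$ and record the only three quantities that enter $p_{\nu,j}(AR)$, namely the first-row entries and the determinant of the product:
\[
    (AR)_{11} = a_{11} r_1, \qquad (AR)_{12} = a_{11} r_2 + a_{12} r_3, \qquad \det(AR) = \det(A)\, r_1 r_3.
\]
Substituting these into the formula for $p_{\nu,j}$ leaves a single nontrivial factor, $(a_{11} r_2 + a_{12} r_3)^j$, which I would expand by the binomial theorem as
\[
    (a_{11} r_2 + a_{12} r_3)^j = \sum_{k=0}^j \binom{j}{k} a_{11}^{j-k} a_{12}^k\, r_2^{j-k} r_3^k .
\]

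After multiplying this sum by $(a_{11} r_1)^{\nu_1-\nu_2-j}$ and by $(\det(A)\, r_1 r_3)^{\nu_2}$, I would collect like powers termwise. In the $k$-th summand the $A$-dependent part is $a_{11}^{\nu_1-\nu_2-k} a_{12}^k \det(A)^{\nu_2}$, which is exactly $p_{\nu,k}(A)$, and the accompanying monomial in $r$ is $r_1^{\nu_1-j} r_2^{j-k} r_3^{\nu_2+k}$; summing over $k$ gives the claimed formula. The only point requiring care is the bookkeeping of exponents — specifically that $r_1^{\nu_1-\nu_2-j}\,r_1^{\nu_2} = r_1^{\nu_1-j}$ and $a_{11}^{\nu_1-\nu_2-j}\,a_{11}^{j-k} = a_{11}^{\nu_1-\nu_2-k}$ — so that the residual $A$-dependence collapses precisely to $p_{\nu,k}(A)$. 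There is no conceptual obstacle here; the entire content is this short algebraic identity, valid for arbitrary $A \in \GL(2,\C)$ and in particular for $A \in \U(2)$.
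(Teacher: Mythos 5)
Your proof is correct and is essentially the same computation as the paper's: the paper parametrizes $A\in\U(2)$ explicitly as $\begin{pmatrix}\alpha & \beta\\ -\gamma\overline\beta & \gamma\overline\alpha\end{pmatrix}$, reads off the first row of $AR$, and applies the binomial theorem to $(\alpha r_2+\beta r_3)^j$ exactly as you do with $a_{11},a_{12}$. The only (harmless) difference is that your version makes explicit that the identity holds for arbitrary $A$, not just unitary $A$.
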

\begin{proof}
    Let $A \in \U(2)$ be given and write
    \[
        A = \begin{pmatrix}
                \alpha & \beta \\
                -\gamma \overline{\beta}  & \gamma \overline{ \alpha}
            \end{pmatrix},
    \]
    where $\alpha,\beta,\gamma\in\C$ with $|\alpha|^2 + |\beta| ^2 = 1$ and $|\gamma| = 1$. Hence, we have
    \[
        A
        \begin{pmatrix}
          r_1  & r_2 \\
          0    & r_3
        \end{pmatrix} =
            \begin{pmatrix}
                \alpha r_1 &  \alpha r_2 + \beta r_3 \\
                *    &  *
            \end{pmatrix},
    \]
    and so we conclude that
    \begin{align*}
        p_{\nu,j}&\left(A
            \begin{pmatrix}
                r_1  & r_2 \\
                0    & r_3
            \end{pmatrix}
            \right) \\
            &= (\alpha r_1)^{\nu_1-\nu_2-j} (\alpha r_2 + \beta r_3)^j
                \det
                \left(A
                    \begin{pmatrix}
                        r_1  & r_2 \\
                        0    & r_3
                    \end{pmatrix}
                \right)^{\nu_2} \\
            &= (\alpha r_1)^{\nu_1-\nu_2-j}
                \sum_{k=0}^j \binom{j}{k}(\alpha r_2)^{j-k} (\beta r_3)^k
                \det
                \left(A
                    \begin{pmatrix}
                        r_1  & r_2 \\
                        0    & r_3
                    \end{pmatrix}
                \right)^{\nu_2} \\
            &= \sum_{k=0}^j \binom{j}{k} \alpha ^{\nu_1-\nu_2-k} \beta^{k}
                \det(A)^{\nu_2} r_1^{\nu_1-j}r_2^{j-k} r_3^{\nu_2 + k} \\
            &= \sum_{k=0}^j \binom{j}{k}
                p_{\nu,k}(A) r_1^{\nu_1-j}r_2^{j-k} r_3^{\nu_2 + k}.
    \end{align*}
    Note that in the last line we have used the expression obtained in the first line.
\end{proof}

We now apply the previous results to compute the spectra of the Toeplitz operators with $\U(2)\times\T^2$-invariant symbols.

\begin{theorem}\label{thm:coefficients}
    Let $\lambda > 3$ and $\varphi \in \cA^{\U(2)\times\T^2}$ be given. With the notation of Proposition~\ref{prop:U2T2-isotypic}, the Toeplitz operator $T_\varphi$ acts on the subspace of $\cH^2_\lambda(D)$ corresponding to $(F_\nu^*\otimes \C_{(\nu_2,\nu_1-\nu_2-2j)})_\lambda$ as a multiple of the identity by the constant
    \begin{align*}
        \gamma(\varphi,\nu,j) &=
            \frac{\left<\varphi p_{\nu,j},p_{\nu,j}\right>_\lambda}%
            {\left<p_{\nu,j},p_{\nu,j}\right>_\lambda} = \\
            &\frac{\displaystyle\sum_{k=0}^j \binom{j}{k}^2\binom{\nu_1-\nu_2}{k}
                    \int_\Omega
                        \varphi
                            \begin{pmatrix}
                                r_1 & r_2 \\
                                0 & r_3 \\
                            \end{pmatrix}
                        a(r,\nu, j, k) b(r)^{\lambda-4} \dif r}%
            {\displaystyle\sum_{k=0}^j \binom{j}{k}^2\binom{\nu_1-\nu_2}{k}
                    \int_\Omega
                        a(r,\nu, j, k) b(r)^{\lambda-4} \dif r}
    \end{align*}
    for every $\nu\in\rN^2$ and $j=0,\dots,\nu_1-\nu_2$, where
    \[
        \Omega = \left\{ r \in (0,\infty)^3 :
                  \begin{pmatrix}
                    r_1 & r_2 \\
                    0 & r_3
                  \end{pmatrix} \in D \right\}.
    \]
    with the functions $a(r,\nu, j, k)= r_1^{2(\nu_1-j)+4}r_2^{2(j-k)+1} r_3^{2(\nu_2+k)+2}$, for $0 \leq k \leq j$, and $b(r) = 1-r_1^2-r_2^2-r_3^2+r_1^2 r_3^2$ for $r \in (0,\infty)^3$.
\end{theorem}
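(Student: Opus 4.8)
The plan is to read off the scalar $\gamma(\varphi,\nu,j)$ by evaluating $T_\varphi$ on the explicit highest weight vector $p_{\nu,j}$. By Proposition~\ref{prop:U2T2-isotypic} each space $(F_\nu^*\otimes\C_{(\nu_2,\nu_1-\nu_2-2j)})_\lambda$ is an irreducible $\R\times\SU(2)\times\R\times\T$-submodule occurring with multiplicity one, and (this is the mechanism behind Theorem~\ref{thm:H-commutativeC*}) the operator $T_\varphi$ commutes with $\pi_\lambda$ restricted to this group whenever $\varphi$ is $\U(2)\times\T^2$-invariant. Schur's lemma therefore forces $T_\varphi$ to act as a scalar on each such submodule, and this scalar may be computed from any nonzero vector in it, in particular from $p_{\nu,j}$. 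Since $B_\lambda$ is the orthogonal projection onto $\cH^2_\lambda(D)$ and $p_{\nu,j}\in\cH^2_\lambda(D)$, I get $\left<T_\varphi p_{\nu,j},p_{\nu,j}\right>_\lambda=\left<B_\lambda(\varphi p_{\nu,j}),p_{\nu,j}\right>_\lambda=\left<\varphi p_{\nu,j},p_{\nu,j}\right>_\lambda$, which already yields the first equality
\[
\gamma(\varphi,\nu,j)=\frac{\left<\varphi p_{\nu,j},p_{\nu,j}\right>_\lambda}{\left<p_{\nu,j},p_{\nu,j}\right>_\lambda}.
\]

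Next I would rewrite the numerator as $\int_D\varphi(Z)\,|p_{\nu,j}(Z)|^2\,\dif v_\lambda(Z)$ and bring in the foliated measure of Proposition~\ref{prop:vlambda_U2T2}. To invoke that proposition I must check that $Z\mapsto\varphi(Z)|p_{\nu,j}(Z)|^2$ is invariant under $Z\mapsto t_\theta Z t_\theta^{-1}$. This holds because conjugation by $t_\theta$ is exactly the action of $(t_\theta,t_\theta)\in\U(2)\times\T^2$, so $\varphi$ is unchanged, while inspection of $p_{\nu,j}(Z)=z_{11}^{\nu_1-\nu_2-j}z_{12}^j\det(Z)^{\nu_2}$ shows conjugation only multiplies it by a unimodular factor, leaving $|p_{\nu,j}|^2$ fixed. (Proposition~\ref{prop:vlambda_U2T2} is stated for $C_c$, but the integrals converge absolutely since $\lambda>3$ makes $v_\lambda$ finite and $p_{\nu,j}$ is polynomial, so a routine approximation extends the formula.) Applying the proposition and using $\varphi(A\,\cdot)=\varphi(\cdot)$ for $A\in\U(2)$ lets me pull $\varphi$ out of the inner $\U(2)$-integral:
\[
\left<\varphi p_{\nu,j},p_{\nu,j}\right>_\lambda=2\pi c_\lambda\int_\Omega\varphi\rmatrix\left[\int_{\U(2)}\left|p_{\nu,j}\left(A\rmatrix\right)\right|^2\dif A\right]r_1^4r_2r_3^2\,b(r)^{\lambda-4}\dif r.
\]

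The heart of the argument is the inner $\U(2)$-integral, which is handled by combining Lemma~\ref{lem:pnuj_asum} with the Schur orthogonality relations of Proposition~\ref{prop:pnuj_Schur_relations}. Expanding $p_{\nu,j}(A\,\cdot)$ by Lemma~\ref{lem:pnuj_asum} and taking the squared modulus produces a double sum over indices $k,l$ of terms $p_{\nu,k}(A)\overline{p_{\nu,l}(A)}$ times monomials in $r$; integrating over $\U(2)$ annihilates every off-diagonal term by Proposition~\ref{prop:pnuj_Schur_relations}, leaving only the diagonal $k=l$ contributions weighted by $\tfrac{1}{\nu_1-\nu_2+1}\binom{\nu_1-\nu_2}{k}$. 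Collecting the surviving powers of $r_1,r_2,r_3$ reproduces exactly the functions $a(r,\nu,j,k)$, so the numerator becomes $\tfrac{2\pi c_\lambda}{\nu_1-\nu_2+1}$ times $\sum_{k=0}^j\binom{j}{k}^2\binom{\nu_1-\nu_2}{k}\int_\Omega\varphi\rmatrix a(r,\nu,j,k)\,b(r)^{\lambda-4}\dif r$, which is precisely the numerator in the statement.

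Finally, the denominator $\left<p_{\nu,j},p_{\nu,j}\right>_\lambda$ is the identical computation with $\varphi\equiv1$, producing the same prefactor $\tfrac{2\pi c_\lambda}{\nu_1-\nu_2+1}$ and the same binomial sum against $\int_\Omega a(r,\nu,j,k)\,b(r)^{\lambda-4}\dif r$. Forming the ratio cancels the prefactor and gives the claimed formula. Once the scalar reduction of the first paragraph is in place, the remainder is a bookkeeping exercise driven mechanically by the two prepared lemmas; accordingly I expect the delicate points to be (i) confirming that $T_\varphi$ genuinely acts as a scalar, i.e.\ that it commutes with $\pi_\lambda|_{\R\times\SU(2)\times\R\times\T}$ on an irreducible, multiplicity-free submodule, and (ii) verifying the $t_\theta$-conjugation invariance and integrability needed to apply Proposition~\ref{prop:vlambda_U2T2} to these non-compactly-supported integrands, rather than the orthogonality computation itself.
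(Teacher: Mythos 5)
Your proposal is correct and follows essentially the same route as the paper: reduce to the scalar $\left<\varphi p_{\nu,j},p_{\nu,j}\right>_\lambda/\left<p_{\nu,j},p_{\nu,j}\right>_\lambda$ via multiplicity-freeness, verify the $t_\theta$-conjugation invariance of $\varphi|p_{\nu,j}|^2$ so that Proposition~\ref{prop:vlambda_U2T2} applies, expand via Lemma~\ref{lem:pnuj_asum}, kill the off-diagonal terms with Proposition~\ref{prop:pnuj_Schur_relations}, and recover the denominator by setting $\varphi\equiv 1$. You are in fact slightly more careful than the paper on two points it leaves implicit, namely the Schur-lemma justification of the scalar action and the extension of Proposition~\ref{prop:vlambda_U2T2} beyond compactly supported integrands.
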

\begin{proof}
    Let $\varphi \in \cA^{\U(2)\times\T^2}$ be given and fix $\nu \in \rN^2$ and $j = 0, \dots, \nu_1-\nu_2$.  First, we observe that we have
    \[
        |p_{\nu,j}(tZt^{-1})|^2 = |p_{\nu,j}(Z)|^2
    \]
    for all $Z\in M_{2\times2}(\C)$ and $t\in\T^2$. The symbol $\varphi$ is bi-$\T^2$-invariant as well. Hence, we can apply Proposition~\ref{prop:vlambda_U2T2} to $\varphi |p_{\nu,j}|^2$ to compute as follows
    \begin{align*}
        \left<\varphi p_{\nu,j},p_{\nu,j}\right>_\lambda
        &= \int_D \varphi(Z) |p_{\nu,j}(Z)|^2 \dif v_\lambda(Z) \\
        =&\,  2\pi c_\lambda
        \int_\Omega \int_{U(2)}
            \varphi\left(A
                \begin{pmatrix}
                  r_1 & r_2 \\
                  0 & r_3
                \end{pmatrix}
            \right)
            \left|p_{\nu,j}\left(A
                \begin{pmatrix}
                  r_1 & r_2 \\
                  0 & r_3
                \end{pmatrix}
            \right)\right|^2  \\
        &\times r_1^4 r_2 r_3^2 b(r)^{\lambda-4}
        \dif A \dif r \\
        =&\, 2\pi c_\lambda
            \sum_{k=0}^j \binom{j}{k}^2 \int_\Omega
            \varphi
                \begin{pmatrix}
                  r_1 & r_2 \\
                  0 & r_3
                \end{pmatrix}
            \int_{U(2)} |p_{\nu,k}(A)|^2 \dif A \\
        &\times a(r,\nu,j.k) b(r)^{\lambda-4} \dif r \\
        =&\, \frac{2\pi c_\lambda}{\nu_1-\nu_2+1}
            \sum_{k=0}^j \binom{j}{k}^2\binom{\nu_1-\nu_2}{k}
            \int_\Omega
                \varphi
                \begin{pmatrix}
                  r_1 & r_2 \\
                  0 & r_3
                \end{pmatrix}\\
                & \times
            a(r,\nu,j.k) b(r)^{\lambda-4} \dif r.
    \end{align*}
    The second identity applies Proposition~\ref{prop:vlambda_U2T2}. For the third identity we apply Proposition~\ref{prop:pnuj_Schur_relations} and the invariance of $\varphi$. In the last identity we apply again the orthogonality relations from Proposition~\ref{prop:pnuj_Schur_relations}.

    The proof is completed by taking $\varphi \equiv 1$ in the above computation.
\end{proof}

\end{document}